\newtheorem{theorem}{Theorem}[section]
\newtheorem{proposition}[theorem]{Proposition}
\newtheorem{lemma}[theorem]{Lemma}
\theoremstyle{definition}
\newtheorem{definition}[theorem]{Definition}
\newtheorem{example}[theorem]{Example}
\theoremstyle{definition}
\newtheorem{remark}[theorem]{Remark}
\numberwithin{equation}{section}
\newcommand{\bfone}{{\mathbf{1}}}
\newcommand{\eps}{{\varepsilon}}
\newcommand{\abs}[1]{\left|{#1}\right|}
\newcommand{\norm}[1]{\lVert{#1}\rVert}
\newcommand{\F}{{\mathbb{F}}}
\newcommand{\N}{{\mathbb{N}}}
\newcommand{\R}{{\mathbb{R}}}
\newcommand{\Mean}{{{\mathbb{E}}}}
\newcommand{\Prob}{{{\mathbb{P}}}}
\newcommand{\cF}{{\mathcal{F}}}
\newcommand{\om}{{\omega}}
\begin{document}
\title[PPDE]
{Path-dependent Hamilton-Jacobi equations\\  with super-quadratic growth in the gradient
\\ and the vanishing viscosity method}

\author{Erhan Bayraktar}
\address
{Department of Mathematics, 
University of  Michigan,
Ann Arbor, MI 48109, United States}
\thanks{The research of the first author was
 supported in part by {\color{black} NSF-grant  DMS-2106556.
 The research of the second author was supported in part
 by NSF-grant DMS-2106077.}}
\email{erhan@umich.edu}
\author{ Christian Keller}
\address
{Department of Mathematics, 
University of  Central Florida,
Orlando, FL 32816, United States}
\email{christian.keller@ucf.edu}

\subjclass[2010]{
60H30; 
60F10,  
 35F21, 
 35K10, 
  49J22}

\keywords{Large deviations, vanishing viscosity method,
path-dependent partial differential equations, minimax solutions,
Dini solutions,
calculus of variations,
optimal control, state constraints,
backward stochastic differential equations, nonsmooth analysis}

\begin{abstract}
 The non-exponential Schilder-type theorem 
in Backhoff-Veraguas, Lacker  and Tangpi  
[Ann.~Appl.~Probab., 30 (2020), pp.~1321--1367]
is expressed as a convergence result for path-dependent partial differential equations
 with  appropriate notions of generalized solutions.
This entails a non-Markovian counterpart to the vanishing viscosity method.

  We show uniqueness of maximal subsolutions for
path-dependent viscous Hamilton-Jacobi equations  related to 
 convex super-quadratic backward stochastic differential
equations.  

 We establish well-posedness for the  Hamilton-Jacobi-Bellman equation associated to a Bolza problem of the calculus of variations with path-dependent
terminal cost. In particular, 
 uniqueness among lower semi-continuous solutions holds
and 
state constraints are admitted.
\end{abstract}
\maketitle
\pagestyle{plain}

\section{Introduction}

 Backhoff-Veraguas, Lacker  and Tangpi  \cite{BLT} derived a non-exponential Schilder-type theorem,
 which they used to obtain new limit theorems for backward stochastic differential equations (BSDEs)
  and, in the Markovian case,
 for the corresponding partial differential equations 
 (PDEs). They posed the question {\color{black} whether} it is possible to have a corresponding PDE result in the
 non-Markovian case as well.
Purpose of our work is to provide an answer to this question.

We establish well-posedness for (second order) path-dependent viscous Hamilton-Jacobi equations
and for (first-order) path-dependent Hamilton-Jacobi-Bellman (HJB) equations with possibly super-quadratic growth in
the gradient. Together with a modification of the Schilder type theorem in \cite{BLT}, we obtain 
a non-Markovian vanishing viscosity result
for path-dependent PDEs and thereby address the mentioned open problem in \cite{BLT}.

The notions of solutions for our path-dependent PDEs are in the spirit of contingent solutions for PDEs
(see, e.g., \cite{Frankowska89AMO}), also known as Dini solutions
(see, e.g., \cite{BardiCapuzzoDolcetta} and \cite{Vinter})
 or minimax solutions (see, e.g., \cite{SubbotinBook}).
 
 In the context of contingent or Dini solutions for first-order standard PDEs related to Bolza problems,  
 \cite{DM-F00}  and also \cite{PQ02}
 are very close to our approach.
More recent works in this direction are  
\cite{Misztela14}, \cite{BernisBettiol18COCV} and \cite{BernisBettiol20chapter}.
Regarding the possible use of viscosity solution techniques,  we refer the reader to the remarks on p.~1202 in
\cite{BettiolVinter17SICON}.  In particular, fast growth in the gradient, discontinuity of the Lagrangian,
 and extended real-valued 
lower semi-continuous
 terminal data (to allow right-end point constraints
 in optimal control problems) cause non-trivial issues. For example, solutions of HJB equations can 
 {\color{black} be}
  expected then to be
 only lower semi-continuous. 

 In the second-order case, the only works we are aware of that use
 contingent-type solutions are \cite{SubbotinaEtAl85} and \cite{Subbotina06},
 where Isaacs equations corresponding to Markovian stochastic differential games with drift control and
 bounded control spaces are studied. However, similar constructions (stochastic or Gaussian derivatives)
 are also used in \cite{Haussmann92MOR} and \cite{Haussmann94SICON}.

In the context of first-order path-dependent PDEs, \cite{KaiseEtAl18PPDE} is most relevant. It deals
with a  calculus of variations problem involving a path-dependent terminal cost and the  related path-dependent 
HJB equation.
The setting is very close to our problem (DOC) below. The main difference is that in \cite{KaiseEtAl18PPDE}
the terminal cost is required to be Lipschitz continuous, which leads to Lipschitz continuity of the corresponding value function
and also makes it possible in \cite{KaiseEtAl18PPDE}
 to develop a viscosity solution theory. In our work, we require only continuity resp.~lower
semi-continuity of the terminal cost, which is one of the reasons why we establish a Dini resp.~minimax solution theory.
For the current state of the art for first-order path-dependent PDEs
and for further relevant works,
see \cite{GLP21AMO} and the references therein.

In the literature 
\cite{EKTZ11,K14,ETZ_I,ETZ_II,PhamZhang14SICON,EkrenZhang16PUQR, RTZ14overview,
Ren16AAP, Ren17Stoch,Ekren17SPA,RTZ17,cosso18,
RTZ20SICON,RR20SIMA} 
on viscosity solutions of second-order path-dependent PDEs,
the Hamiltonian is required to grow at most linearly in the gradient
(the same condition is also needed in \cite{CossoRusso19Osaka}, where a notion of strong-viscosity
solutions is used). Overcoming this restriction for any notion of generalized solutions
 has been a longstanding open problem.
By proving wellposedness of maximal (Dini) subsolutions for a class
of second-order path-dependent PDEs with quadratic and even super-quadratic growth in the gradient, 
we establish first results related to this problem.

Non-Markovian large deviation problems and their connections to path-dependent PDEs are also
studied in \cite{MRTZ16}. In contrast to our work, in \cite{MRTZ16} only the (limiting) rate function is characterized as a
solution of a (first-order) path-dependent PDE. 
Moreover, the terminal condition  
is required to be Lipschitz continuous
whereas we need only continuity.

\section{Setup}
\subsection{Notation}
Let $\Omega=C([0,T],\R^d)$. The canonical process on $\Omega$ is denoted by $X$, i.e.,
$X(t,\omega)=\omega(t)$ for each $(t,\omega)\in [0,T]\times\Omega$.
Let $\F=\{\cF_t\}_{t\in [0,T]}$ be the (raw) filtration generated by $X$.
Given a probability measure $P$ on $(\Omega,\cF_T)$, denote by
$\F^P=\{\cF_t^P\}_{t\in [0,T]}$ the $P$-completion of the right-limit of $\F$.

We equip $\Omega$ with the  supremum norm $\norm{\cdot}_\infty$ and $[0,T]\times\Omega$ 
with the pseudo-metric $\mathbf{d}_\infty$
defined by  
\begin{align*}
\mathbf{d}_\infty((t_1,\omega_1),(t_2,\omega_2)):=\abs{t_1-t_2}+\sup\nolimits_{s\in [0,T]}
 \abs{\omega_1(s\wedge t_1)-\omega_2(s\wedge t_2)}.
\end{align*}
Continuity and semi-continuity of functions defined on $\Omega$ (resp.~$[0,T]\times\Omega$)
are to be understood with respect to $\norm{\cdot}_\infty$ (resp.~$\mathbf{d}_\infty$). Note that semi-continuous functions on
$[0,T]\times\Omega$ are $\F$-progressive. From now on, we write l.s.c.~(resp.~u.s.c.) instead
of lower semi-continuous (resp.~upper semi-continuous).

{\color{black}
With slight abuse of notation, we also use the notation $\norm{\cdot}_\infty$ to express the
sup-norm for functions belonging to other function spaces.
}

{\color{black} We often identify 
 vectors with constant functions, e.g., given a map 
$h:\Omega\to\R$, a vector $z\in\R^d$, and a path $\omega\in\Omega$, we write $h(\omega+z)$ instead of $h(\omega+z\,\bfone_{[0,T]})$.}

Given $(t_0,x_0,n)\in [0,T]\times\Omega\times\N$, 
denote by  $P_{t_0,x_0,n}$ be the unique probability measure on $(\Omega,\cF_T)$ such that
$\sqrt{n} X\vert_{[t_0,T]}$ is a $d$-dimensional standard $(P_{t_0,x_0,n},\F)$-Wiener process  
starting at $x_0(t_0)$
and
that $P_{t_0,x_0,n}(X\vert_{[0,t_0]}=x_0\vert_{{\color{black} [0,t_0]}})=1$.
We write $\Mean_{t_0,x_0,n}$ for the corresponding expected value.
Moreover, $P_{t_0,x_0}:=P_{t_0,x_0,1}$ and
$\F^{t_0,x_0,n}:=\F^{P_{t_0,x_0,n}}$.

 As space of controls,  the set $\mathcal{L}_b$ of all  bounded
$\F$-progressive processes from $[0,T]\times\Omega$ to $\R^d$ is used 
(whereas in \cite{BLT} the controls are $\F^{P_{0,0}}$-progressive).

We denote by $\mathrm{dom}$  the effective domain of an extended real-valued function.

\subsection{Data} 
Let $h: \Omega\to \R\cup\{\infty\}$ and $\ell:[0,T]\times\R^d\to\R\cup\{\infty\}$ be measurable
functions.
We  use the following hypotheses for $\ell$. 

(H1) The function $\ell=\ell(t,a)$ 
satisfies the Tonelli-Nagumo condition,
i.e., there is a function $\phi:[0,\infty)\to\R$  bounded from below
with $\phi(r)/r\to\infty$ as $r\to\infty$ such that $\ell(t,a)\ge \phi(\abs{a})$ on $[0,T]\times\R^d$.
Moreover, $\ell(t,\cdot)$ is l.s.c., proper, and convex for every $t\in [0,T]$. 

(H2) $\int_0^T \ell(t,0)\,dt<\infty$.

These hypotheses are nearly identical with the corresponding condition (TI) for the Lagrangian in \cite{BLT}
(where it is denoted by $g$). In some of our main results, we will, in addition to (H1) and (H2), also assume that
$\ell$ is continuous and finite-valued. In those cases, (TI) is satisfied as pointed out in \cite{BLT}.

\subsection{The optimal control problems and HJB equations}\label{SS:OCP_HJB}
Let $n\in\N$. The value for our stochastic optimal control problem $(\text{SOC}_n)$ with initial 
data $(t_0,x_0)\in [0,T]\times \Omega$ is given by
\begin{align*}
v_n(t_0,x_0)
:=\inf_{a\in\mathcal{L}_b^{t_0}}  \Mean_{t_0,x_0,n}\left[
\int_{t_0}^T \ell(t,a(t))\,dt +h(X+A^a)
\right],
\end{align*}
where  $\mathcal{L}_b^{t_0}=\{a\in\mathcal{L}_b: a\vert_{[0,t_0)}=0\}$ 
and 
$A^a$ is a continuous process on $[0,T]\times\Omega$ defined by $A^a(t):=\int_0^t a(s)\,ds$. 
The terminal value problem involving the corresponding HJB equation is 
\renewcommand{\theequation}
{\textrm{TVP}~$n$}
\begin{equation}\label{E:TVPn}
\begin{split}
\left(\partial_t +\frac{1}{2n} \partial_{xx}\right) u(t,x)+\inf_{a\in\R^d}\left[
a\cdot\partial_x u(t,x)+\ell(t,a)
\right]&=0\quad\text{in $[0,T)\times\Omega$},\\
u(T,x)&=h(x)\quad\text{on $\Omega$.}
\end{split}
\end{equation}
\begin{remark}\label{R:UniformBoundedness}
If (H1) and (H2) hold and $h$ is bounded, then 
$\sup_{n\ge 1} \norm{v_n}_\infty<\infty$.
\end{remark}
\setcounter{equation}{0}
\renewcommand{\theequation}{\thesection.\arabic{equation}}
The value for our deterministic optimal control problem $(\text{DOC})$ with initial 
data $(t_0,x_0)\in [0,T]\times \Omega$ is given by
\begin{align*}
v_0(t_0,x_0):=\inf_{x\in\mathcal{X}^{1,1}(t_0,x_0)} \left[
\int_{t_0}^T \ell(t,x^\prime(t))\,dt +h(x)
\right],
\end{align*}
where
\begin{align*}
\mathcal{X}^{1,1}(t_0,x_0):=\left\{
x\in \Omega:\, x\vert_{[0,t_0]}=x_0\vert_{[0,t_0]}\text{ and } x\vert_{{\color{black}(t_0,T)}}\in W^{1,1}(t_0,T;\R^d)
\right\}.
\end{align*}
{\color{black}
Here, $W^{1,p}(t_0,T;\R^d)$, 
$p\in [1,\infty]$, is the Sobolev space of all 
 $x\in L^p(t_0,T;\R^d)$ that
have a weak 
derivative $x^\prime\in L^p(t_0,T;\R^d)$.}
The terminal value problem involving the corresponding HJB equation is 
\renewcommand{\theequation}
{\textrm{TVP}}
\begin{equation}\label{E:TVP0}
\begin{split}
\partial_t u(t,x)+\inf_{a\in\R^d}\left[
a\cdot\partial_x u(t,x)+\ell(t,a)
\right]&=0\quad\text{in $[0,T)\times\Omega$},\\
u(T,x)&=h(x)\quad\text{on $\Omega$.}
\end{split}
\end{equation}
\setcounter{equation}{0}
\renewcommand{\theequation}{\thesection.\arabic{equation}}
\section{Notions of solutions of path-dependent HJB equations}
{\color{black} We call a function $u:[0,T]\times\Omega\to\R\cup\{\infty\}$ \emph{non-anticipating}
if $u(t,x)=u(t,x(\cdot\wedge t))$ for every $(t,x)\in [0,T]\times\Omega$.
Note that whenever a function on $[0,T]\times\Omega$ is l.s.c.~or u.s.c.~(with respect to $\mathbf{d}_\infty$),
then it is automatically non-anticipating.}

\subsection{Dini solutions} 
Given a {\color{black} non-anticipating} function $u:[0,T]\times\Omega\to\R\cup\{\infty\}$, we 
define the \emph{lower} and \emph{upper Dini derivative}
\begin{align*}
d_- u(t_0,x_0)(1,a)&:=\varliminf_{\delta\downarrow 0} \frac{u(t_0+\delta,x_0(\cdot\wedge t_0)
+{\color{black} A^a(\cdot\vee t_0)-A^a(t_0)})
-u(t_0,x_0)}{\delta}{\color{black},}\\
d_+ u(t_0,x_0)(1,a)&:=\varlimsup_{\delta\downarrow 0} \frac{u(t_0+\delta,x_0(\cdot\wedge t_0)
+{\color{black} A^a(\cdot\vee t_0)-A^a(t_0)})
-u(t_0,x_0)}{\delta}
\end{align*}
at  points $(t_0,x_0)\in [0,T)\times\Omega$ 
in direction $(1,a)\in\R\times\R^d$.
Here, in unison with the process $A^a$ in Subsection~\ref{SS:OCP_HJB}, $A^a(t)=at$.

The following (path-dependent) notion of Dini semi-solutions 
is motivated by the notion of (contingent) 
solutions used in Theorem~4.1 of \cite{DM-F00} for HJB equations related to Bolza problems.
Our notion is also related to the infinitesimal version of minimax solutions for path-dependent Isaacs equations  used in  \cite{Lukoyanov01JAMM}.

\begin{definition}\label{D:1stDiniSol}
Let $u:[0,T]\times \Omega\to \R\cup\{\infty\}$.

(i) We call $u$ a 
\emph{Dini supersolution}
of \eqref{E:TVP0}  if $u$ is l.s.c.,  $u(T,\cdot)\ge h$, and, for every 
$ (t_0,x_0)\in {\color{black} \mathrm{dom}(u)}$ {\color{black} with $t_0<T$}, 
\begin{align}\label{E:ContSupersolution}
\inf_{a\in\R^d}  \left[
d_-
u(t_0,x_0)(1,a)+
\ell(t_0,a)
\right]\le 0.
\end{align}

(ii)  We call $u$ a \emph{Dini subsolution}
of \eqref{E:TVP0} if  $u$ is 
u.s.c.,
 $u(T,\cdot)\le h$, and, for every 
$(t_0,x_0)\in\mathrm{dom}(u)$ with $t_0<T$,
\begin{align}\label{E:ContSubsolution}
\inf_{a\in\R^d} \left[d_+ u(t_0,x_0)(1,a)+\ell(t_0,a)\right]\ge 0.
\end{align}
We call $u$ a \emph{maximal Dini subsolution} of \eqref{E:TVP0} if $u$ is a 
Dini subsolution of \eqref{E:TVP0} and, for every Dini subsolution $v$ of \eqref{E:TVP0},
we have $u\ge v$.
\end{definition}

\begin{example}\label{E:Example}
Let $d=1$,  $h=\infty.\bfone_{K^c}$, where $K:=\{t\mapsto t^{1/2}\}\subset\Omega$, and $\ell$ be defined by
$\ell(t,a)=\abs{a}^{3/2}$.  Then 
$v_0$ satisfies
\begin{align*}
v_0(t_0,x_0)=\begin{cases}
\int_{t_0}^T 2^{-3/2}\, t^{-3/4}\,dt= 
2^{1/2}\left(T^{1/4}-t_0^{1/4}\right) &\text{if $x_0\vert_{[0,t_0]}(t)=t^{1/2}$,}\\
\infty &\text{otherwise.}
\end{cases}
\end{align*}
Note that, for each $(t_0,x_0)\in\mathrm{dom}(v_0)$ with $t_0<T$ and each $a\in\R$, we have 
$d_- v_{\color{black} 0}(t_0,x_0)(1,a)=\infty$ for which the constant perturbation involving $a$ is responsible.
(To obtain a finite value for our Dini derivative, we would need to permit the non-constant perturbation $t\mapsto t^{1/2}$.)
Thus 
$\inf_{a\in\R} [d_- v_{\color{black} 0}(t_0,x_0)(1,a)+\abs{a}^{3/2}]\le 0$ is never satisfied. Hence, in our example
there does not exist a Dini supersolution of \eqref{E:TVP0}.
This justifies the need for an appropriate weaker notion
of solution (see Subsection~\ref{SS:Minimax}).
\end{example}

Given {\color{black} a non-anticipating function} $u:[0,T]\times\Omega\to\R$, 
we define the
\emph{upper stochastic Dini derivative}  
\begin{align*}
&d_+^{1,2} u(t_0,x_0)(1,a,n^{-1} I_d)\\&\qquad:=
\varlimsup_{\delta\downarrow 0}\frac{
\Mean_{t_0,x_0,n} \left[u(t_0+\delta,X
+{\color{black} A^a(\cdot\vee t_0)-A^a(t_0)})
-u(t_0,x_0)\right]}{\delta}.
\end{align*}
at  points $(t_0,x_0)\in [0,T)\times\Omega$  in direction $(1,a,n^{-1} I_d)\in\R\times\R^d\times\R^{d\times d}$
(cf.~\cite{SubbotinaEtAl85, Subbotina06, Haussmann92MOR, Haussmann94SICON}).

The following notion of subsolutions for second order path-dependent PDEs 
is motivated by the minimax solutions used in \cite{SubbotinaEtAl85, Subbotina06} in a
Markovian framework.

\begin{definition}\label{D:2ndDiniSol}
Let $u:[0,T]\times\Omega\to\R$. 
We call $u$ a  \emph{Dini subsolution} of \eqref{E:TVPn} if $u$ is u.s.c.,
$u(T,\cdot)\le h$, and, 
for every $(t_0,x_0)\in [0,T)\times\Omega$,
\begin{align}\label{E:Subsolution2}
\inf_{a\in\R^d}\left[
d^{1,2}_+ u(t_0,x_0)(1,a,n^{-1}I_d)+\ell(t_0,a)
\right]
\ge 0.
\end{align}
We call $u$ a \emph{maximal Dini subsolution} of \eqref{E:TVPn} if $u$ is a 
Dini subsolution of \eqref{E:TVPn} and, for every Dini subsolution $v$ of \eqref{E:TVPn},
we have $u\ge v$.
\end{definition}

\begin{remark}
In our specific setting, the use of Dini type semiderivatives such as those introduced in  this section suffices.
This motivates us to call our generalized solutions Dini solutions. 
 For more general data,  path-dependent counterparts of contingent derivatives
such as the Clio derivatives in \cite{AubinHaddad02PPDE} need to be utilized.
Corresponding generalized solutions would be called contingent solutions.
 \end{remark}

\subsection{Minimax solutions}\label{SS:Minimax}
Here, we introduce a weaker notion of solution, which is an adjustment of the 
infinitesimal notion of minimax solutions in \cite{BK18JFA}.
It is also motivated by the notion of (l.s.c.~contingent) 
solutions  used in Theorem~5.1 of \cite{DM-F00}.
The problem in Example~\ref{E:Example}, which partially motivated this weaker notion,
 is overcome by allowing non-constant perturbations
(see also the notion of contingent solutions in \cite{Carja12SICON} that are defined via
contingent derivatives with function-valued directions).

\begin{definition}\label{D:minimax}
Let $u:[0,T]\times \Omega\to \R\cup\{\infty\}$ {\color{black} be non-anticipating}.

(i) We call $u$ a \emph{minimax supersolution} of \eqref{E:TVP0}  if $u$ is l.s.c.,   $u(T,\cdot)\ge h$, and, for every 
$ (t_0,x_0)\in {\color{black} \mathrm{dom}(u)}$ {\color{black} with $t_0<T$}, 
\begin{align}\label{E:Supersolution}
\inf_{x\in \mathcal{X}^{1,1}(t_0,x_0)}
 \varliminf_{\delta\downarrow 0} \left[ u(t_0+\delta,x)-u(t_0,x_0)+
\int_{t_0}^{t_0+\delta} \ell(s,x^\prime(s))\,ds
\right]\delta^{-1}\le 0.
\end{align}

(ii)  We call $u$ an \emph{l.s.c.~minimax
subsolution} of \eqref{E:TVP0} if  $u$ is l.s.c.,   $u(T,\cdot)\le h$, and, for every 
$(t_0,x_0)\in [0,T)\times\Omega$ and every
$(t,x)\in \left( (t_0,T]\times\mathcal{X}^{1,1}(t_0,x_0)\right)\cap\mathrm{dom}(u)$
with $\int_{t_0}^t \ell(s,x^\prime(s))\,ds<\infty$,
\begin{align}\label{E:Subsolution}
 \varliminf_{\delta\downarrow 0}
\left[
u(t-\delta,x)-u(t,x)-
\int_{t-\delta}^{t} \ell(s,x^\prime(s))\,ds
\right]\delta^{-1} \le 0.
\end{align}

(iii)  We call $u$  an \emph{l.s.c.~minimax solution} of \eqref{E:TVP0} if $u$ is  a minimax super- and an
l.s.c.~minimax subsolution of \eqref{E:TVP0}.
\end{definition}

{\color{black} \subsection{Consistency with classical solutions}
First, we provide the definitions for path derivatives.
The first-order ones are due to Kim \cite{KimBook} and  the second-order ones are due to Dupire \cite{dupirefunctional}.
Our presentation follows \cite{ETZ_I} and \cite{Lukoyanov03} .

\begin{definition}
Let $u:[0,T]\times\Omega\to\R$.

(i) We write $u\in C^{1,1}([0,T]\times\Omega)$ if $u\in C([0,T]\times\Omega,\R)$ and if there
are functions $\partial_t u\in C([0,T]\times\Omega,\R)$ and $\partial_x u\in C([0,T]\times\Omega,\R^d)$ 
called \emph{first-order path derivatives}
such
that, for every  $(t_0,x_0)\in [0,T)\times\Omega$, every $x\in\mathcal{X}^{1,1}(t_0,x_0)$ and
every $t\in (t_0,T]$, we have
\begin{align*}
u(t,x)-u(t_0,x_0)=\int_{t_0}^t \left[\partial_t u(s,x)+x^\prime(s)\cdot \partial_x u(s,x)\right]\,ds.
\end{align*}

(ii) We write $u\in C^{1,2}([0,T]\times\Omega)$ if $u\in C^{1,1}([0,T]\times\Omega)$
with corresponding first-order path derivatives $\partial_t u$ and $\partial_x u$ and if there is
a function $\partial_{xx} u\in C([0,T]\times\Omega,\R^{d\times d})$ called
\emph{second-order path derivative} such that, for every $(t_0,x_0)\in [0,T)\times\Omega$,
every probability measure $P$ on $\cF_T$ such that $X$ is a $(P,\F)$-It\^o-semimartingale
after time $t_0$ with bounded characteristics 
and with $P(X\vert_{[0,t_0]}=x_0\vert_{[0,t_0]})=1$, and every
$t\in (t_0,T]$, we have
\begin{align*}
u(t,X)-u(t_0,x_0)&=\int_{t_0}^t \partial_t u(s,X)\,ds+\int_{t_0}^t \partial_x u(s,X)\cdot dX(s)\\
&\qquad\qquad+
\int_{t_0}^t  \frac{1}{2}\partial_{xx} u(s,X):d\langle X(s)\rangle,\quad\text{$P$-a.s.}
\end{align*}
Here, $\langle X(\cdot)\rangle$ is the quadratic variation of $X\vert_{[t_0,T]\times\Omega}$
and, given matrices $M$, $N\in\R^{d\times d}$, $M:N$ is the trace of $MN^\top$.
\end{definition}

\begin{remark}
If $u\in C^{1,1}([0,T]\times\Omega)$, then its first-order path derivatives are
uniquely determined. If, in addition, $u\in C^{1,2}([0,T]\times\Omega)$, then
its second-order path-derivative is uniquely determined as well.
We refer to Section~2.3 of \cite{ETZ_I} for more details.
\end{remark}

\begin{definition}\label{D:ClassicalSol}
Let $u:[0,T]\times\Omega\to\R$.

(i) We call $u$ a \emph{classical subsolution} (resp.~\emph{classical supersolution}, 
\emph{classical solution}) of \eqref{E:TVP0}  if $u\in C^{1,1}([0,T]\times\Omega)$,
$u(T,\cdot)\le h$ (resp.~$u(T,\cdot)\ge h$, $u(T,\cdot)=h$),
and, for every $(t,x)\in [0,T)\times\Omega$,
\begin{align*}
\partial_t u(t,x)+\inf_{a\in\R^d}\left[
a\cdot\partial_x u(t,x)+\ell(t,a)
\right] \ge\,\text{(resp.~$\le$, $=$)}\,0.
\end{align*}
 
(ii) We call $u$ a \emph{classical subsolution} (resp.~\emph{classical supersolution}, 
\emph{classical solution}) of \eqref{E:TVPn}  if $u\in C^{1,2}([0,T]\times\Omega)$ 
 (resp.~$u(T,\cdot)\ge h$, $u(T,\cdot)=h$) and,
  for every $(t,x)\in [0,T)\times\Omega$,
  \begin{align*}
\partial_t u(t,x)+\frac{1}{2n}\partial_{xx} u(t,x)+\inf_{a\in\R^d}\left[
a\cdot\partial_x u(t,x)+\ell(t,a)
\right] \ge\,\text{(resp.~$\le$, $=$)}\,0.
\end{align*}
\end{definition}

The following result follows immediately from Definitions~\ref{D:1stDiniSol}, \ref{D:2ndDiniSol},
and~\ref{D:ClassicalSol}.

\begin{proposition}[Consistency of Dini solutions with classical solutions]
Assume that $\ell$ is continuous.

(i) Let $u\in C^{1,1}([0,T]\times\Omega)$. Then $u$ is a 
classical subsolution (resp.~classical supersolution, classical solution)
of \eqref{E:TVP0} 
if and only if $u$ is  a Dini subsolution (resp.~Dini supersolution, Dini solution
of \eqref{E:TVP0}.

(ii) Let $u\in C^{1,2}([0,T]\times\Omega)$. Then
$u$ is a classical subsolution of \eqref{E:TVPn} if and only if
$u$ is a Dini subsolution of \eqref{E:TVPn}.
\end{proposition}


\begin{proposition}[Partial consistency of l.s.c.~minimax solutions with classical solutions]\label{P:ConsMinimax}
Assume that $\ell$ is continuous and real-valued.
Let $u\in C^{1,1}([0,T]\times\Omega)$. 

(a) If
$u$  is an l.s.c.~minimax subsolution  of \eqref{E:TVP0}, then $u$ is a classical subsolution of
\eqref{E:TVP0}.

(b) If $u$ is a classical supersolution of \eqref{E:TVP0}, then $u$ is a minimax supersolution of
\eqref{E:TVP0}.
\end{proposition}

\begin{remark}
The converse of Proposition~\ref{P:ConsMinimax}~(b) cannot expected to be valid in general because
the infimum over $\mathcal{X}^{1,1}(t_0,x_0)$ in \eqref{E:Supersolution} can be strictly less than 
the corresponding infimum over $\R^d$. For similar reasons, we cannot expect the converse of 
 Proposition~\ref{P:ConsMinimax}~(a) to be valid in general.
\end{remark}

\begin{proof}[Proof of Proposition~\ref{P:ConsMinimax}]
Part~(b) follows immediately from Definition~\ref{D:minimax}~(i) and Definition~\ref{D:ClassicalSol}~(i).
It remains to prove part~(a). To this end, 
fix $(t_0,x_0)\in [0,T)\times\Omega$
and assume that $u$ is an l.s.c.~minimax subsolution of \eqref{E:TVP0}.
Fix $t\in (t_0,T]$.
Fix $a\in \R^d$.
Then, for any $x(\cdot)\in\mathcal{X}^{1,1}(t_0,x_0)$ with a continuous derivative $x^\prime$ that satisfies $x^\prime(t)=a$,
we have
\begin{align*}
0&\ge \varliminf_{\delta\downarrow 0}
\left[
u(t-\delta,x)-u(t,x)-
\int_{t-\delta}^{t} \ell(s,x^\prime(s))\,ds
\right]\delta^{-1}\\ &=
 \varliminf_{\delta\downarrow 0}
\left[
-\left(
\int_{t-\delta}^t \partial_t u(s,x)+\partial_x u(s,x)\cdot x^\prime(s)\,ds
\right)-
\int_{t-\delta}^{t} \ell(s,x^\prime(s))\,ds
\right]\delta^{-1}\\
&=-\partial_t u(t,x)-\partial_x u(t,x)\cdot a-\ell(t,a).
\end{align*}
Since $u\in C^{1,1}([0,T]\times\Omega)$, $\ell$ is continuous, and  $a$ was arbitrary in $\R^d$, we have
\begin{align*}
\inf_{a\in\R^d}\left[ \partial_t u(t_0,x_0)+\partial_x u(t_0,x_0)\cdot a+\ell(t_0,a)\right]\ge 0,
\end{align*}
i.e., $u$ is classical subsolution of \eqref{E:TVP0}. 
\end{proof}
}
\section{Main results}

\begin{theorem}\label{T:MainResult}
Assume (H1)  {\color{black}with $\phi(r)=\abs{r}^p$ for some $p>1$} and (H2). 
Let $h$ be  continuous and bounded. Then  $(v_n)$ converges to $v_0$ uniformly on compacta and
$v_0$ is continuous.
 Moreover, $v_0$ is the unique l.s.c.~minimax solution 
of \eqref{E:TVP0} that is bounded from below.
If, in addition, $\ell$ is  
 continuous and finite-valued, 
then we have the following:

(i)  For each $n\in\N$, the function  $v_n$ is the unique bounded maximal 
Dini subsolution of \eqref{E:TVPn}

(ii) The function $v_0$ is the unique bounded maximal   
Dini subsolution of \eqref{E:TVP0}. 
\end{theorem}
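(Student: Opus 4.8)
The plan is to establish the theorem in stages, treating convergence, the minimax characterization, and the Dini characterizations separately, since they rely on rather different machinery. First I would address the convergence $v_n \to v_0$ together with continuity of $v_0$. The natural approach is to invoke the (modified) non-exponential Schilder-type theorem of \cite{BLT}: the value $v_n$ is a BSDE/stochastic-control value whose associated large-deviation rate functional is precisely the action $\int_{t_0}^T \ell(s,x'(s))\,ds$ appearing in $v_0$. Under (H1)--(H2) with $h$ continuous and bounded, Remark~\ref{R:UniformBoundedness} gives the uniform bound $\sup_n \norm{v_n}_\infty<\infty$, which one upgrades to equicontinuity (in the pseudo-metric $\mathbf{d}_\infty$) via standard control-theoretic estimates on the cost of steering the driven path, and then extracts locally uniform convergence by Arzel\`a--Ascoli on compacta. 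Identifying the limit with $v_0$ is where the Schilder-type theorem does the real work, matching the exponential-scale fluctuations of the rescaled Wiener process $\sqrt{n}X$ against the deterministic calculus-of-variations problem; continuity of $v_0$ then follows from that of the limit.

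Next I would prove that $v_0$ is an l.s.c.~minimax solution of \eqref{E:TVP0} and that it is the unique one bounded from below. For the subsolution property \eqref{E:Subsolution} and supersolution property \eqref{E:Supersolution}, I would use the dynamic programming principle for $v_0$: along any admissible path $x\in\mathcal{X}^{1,1}(t_0,x_0)$ one has $v_0(t_0,x_0)\le \int_{t_0}^{t_0+\delta}\ell(s,x'(s))\,ds + v_0(t_0+\delta,x)$, which upon rearranging and dividing by $\delta$ yields \eqref{E:Supersolution} after optimizing over $x$; the reverse inequality giving \eqref{E:Subsolution} comes from near-optimal paths. Uniqueness among l.s.c.~minimax solutions bounded from below is the comparison-principle step: given a minimax subsolution $u$ and supersolution $w$, one propagates the terminal inequality $u(T,\cdot)\le h\le w(T,\cdot)$ backward by testing the definitions along appropriately chosen trajectories, exploiting convexity and the Tonelli--Nagumo coercivity in (H1) to control the perturbation directions. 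This is the technical heart of the first assertion, and I expect the comparison argument — reconciling the two one-sided infinitesimal conditions along common admissible paths while handling the extended-real-valued, merely l.s.c.~terminal cost $h$ and possible state constraints encoded in $\mathrm{dom}(h)$ — to be the main obstacle.

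For part~(i), under the additional assumption that $\ell$ is continuous and finite-valued, I would show $v_n$ is a Dini subsolution of \eqref{E:TVPn} and is maximal among bounded ones. That $v_n$ is a subsolution follows from the stochastic dynamic programming principle: for the value $v_n$ one has, for every $a\in\R^d$, the estimate $\Mean_{t_0,x_0,n}[v_n(t_0+\delta, X+A^a-A^a(t_0))] + \int_{t_0}^{t_0+\delta}\ell(s,a)\,ds \ge v_n(t_0,x_0)$, which after dividing by $\delta$ and taking $\varlimsup$ delivers \eqref{E:Subsolution2}; u.s.c.~of $v_n$ comes from its representation as an infimum over controls. For maximality, given any bounded Dini subsolution $w$ of \eqref{E:TVPn}, I would use the subsolution inequality together with the martingale structure of $\sqrt{n}X$ to run a verification-type comparison: along the controlled dynamics, $t\mapsto w(t, X+A^a-A^a(t_0)) + \int_{t_0}^t \ell(s,a)\,ds$ is (approximately) a submartingale under $P_{t_0,x_0,n}$, so optional sampling at $T$ combined with $w(T,\cdot)\le h$ gives $w(t_0,x_0)\le \Mean_{t_0,x_0,n}[\int_{t_0}^T\ell + h]$, and optimizing over $a\in\mathcal{L}_b^{t_0}$ yields $w\le v_n$.

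Finally, for part~(ii) I would follow the same maximal-subsolution scheme in the first-order case, now using the deterministic dynamic programming principle for $v_0$ in place of the stochastic one. Here $v_0$ is a Dini subsolution of \eqref{E:TVP0} via \eqref{E:ContSubsolution}, and for any bounded Dini subsolution $w$ I would integrate the differential inequality along near-optimal admissible curves $x\in\mathcal{X}^{1,1}(t_0,x_0)$: the condition $\inf_{a}[d_+ w(t_0,x_0)(1,a)+\ell(t_0,a)]\ge 0$ forces $t\mapsto w(t,x)+\int_{t_0}^t\ell(s,x'(s))\,ds$ to be nondecreasing along such curves (a Dini-derivative monotonicity lemma), and evaluating at $T$ against $w(T,\cdot)\le h$ then taking the infimum over curves produces $w\le v_0$. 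The subtle point common to both parts (i) and (ii) is justifying the passage from the infinitesimal Dini inequality to the integrated monotonicity statement when $w$ is only one-sided semicontinuous; I would handle this via a standard but careful argument comparing $w$ to $v_0$ on a time grid and exploiting the finiteness and continuity of $\ell$ to ensure the directional increments are well controlled.
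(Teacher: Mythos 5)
Your overall architecture (convergence, then minimax well-posedness, then maximality of the value functions among Dini subsolutions via comparison along controlled dynamics) matches the paper's, but two of your key steps have genuine gaps. First, the convergence argument: you propose equicontinuity of $(v_n)$ plus Arzel\`a--Ascoli. No uniform-in-$n$ modulus of continuity for $v_n$ is available here: $h$ is only continuous and bounded on the non-compact space $\Omega$ (hence not uniformly continuous), and $\ell$ has super-quadratic growth, so the ``standard control-theoretic estimates'' you invoke do not exist in this generality. The paper avoids equicontinuity entirely by the half-relaxed-limit method: it defines the semicontinuous envelopes $v_\ast$, $v^\ast$ and proves $v_0\le v_\ast$ (Lemma~\ref{L:LSC}, via tightness of the processes $A^{a_{n_k}}$, Skorohod representation, and the stochastic closure Lemma~\ref{L:Closure}) and $v^\ast\le v_0$ (Lemma~\ref{L:USC}, via truncated controls $a^N$), whence $v_0\le v_\ast\le v^\ast\le v_0$; locally uniform convergence and continuity of the limit then follow from Lemmata~V.1.5 and V.1.9 of \cite{BardiCapuzzoDolcetta} together with Remark~\ref{R:UniformBoundedness}, with no compactness extraction needed.

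Second, in parts (i) and (ii) you assume precisely the two facts that constitute the paper's technical core. (1) Your subsolution property for $v_n$ rests on the stochastic dynamic programming inequality $v_n(t_0,x_0)\le \Mean_{t_0,x_0,n}[\int_{t_0}^{t}\ell(s,a)\,ds+v_n(t,X+A^a-A^a(t_0))]$; in this non-Markovian, super-quadratic setting this direction of the DPP requires a measurable concatenation of near-optimal controls, which is not a standard fact. The paper instead derives it from BSDE theory: Theorem~\ref{T:BSDE} identifies $v_n$ with minus the minimal supersolution functional $\mathcal{E}^{t_0,x_0,n}$, and the flow property of that functional (Proposition~3.6(1) of \cite{DrapeauEtAl13AOP}) substitutes for the DPP; likewise, upper semicontinuity of $v_n$ is not automatic from ``infimum over controls,'' since the admissible set $\mathcal{L}_b^{t_0}$ varies with $(t_0,x_0)$ --- the paper needs the rescaling Lemma~\ref{L:Rescaling} to rewrite $v_n$ as an infimum, over a fixed control set, of functions u.s.c.~in $(t_0,x_0)$. (2) Your ``approximate submartingale plus optional sampling'' step, and its first-order analogue (the ``Dini-derivative monotonicity lemma''), is exactly the passage from a pointwise Dini inequality to an integrated inequality for a merely semicontinuous function. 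You flag this as the subtle point, but a comparison ``on a time grid'' is not an argument: the Dini condition controls increments only at the initial point of each path, not along the flow, and for u.s.c.~functions no elementary monotonicity lemma applies. The paper's Lemma~\ref{L:Subsolution2} proves this equivalence via Zorn's lemma over stopping times combined with a measurable selection from the compact-valued map $M(t,\omega)$, and the first-order counterparts Lemmata~\ref{L:MinimaxSuper} and \ref{L:MinimaxSub} use Tonelli-type attainment and compactness arguments; maximality and the minimax comparison principle then follow almost immediately by iterating these integrated inequalities along piecewise-constant near-optimal controls resp.~minimizing curves. Without these lemmas, your verification steps in (i) and (ii) and your uniqueness claim for the minimax solution do not go through.
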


\begin{proof}
The uniform convergence of $(v_n)$  to $v_0$  on compacta and the continuity of  $v_0$ are
proven in Section~\ref{S:Convergence}.  Theorem~\ref{T:1stOrder} 
characterizes $v_0$ as the unique minimax 
solution resp.~as  the unique bounded
maximal 
Dini subsolution of \eqref{E:TVP0}. 
Theorem~\ref{T:TVPn} addresses the remaining part, i.e., wellposedness of \eqref{E:TVPn}.
\end{proof}

\begin{remark}
Well-posedness of \eqref{E:TVPn} requires $h$ to be only u.s.c.~and bounded (see Theorem~\ref{T:TVPn}).
The corresponding result in the Markovian case treated in \cite{BLT} is of similar strength
(well-posedness holds for maximal viscosity supersolutions of 
the corresponding viscous Hamilton-Jacobi equations, which is due to \cite{DrapeauMainberger16EJP}).

\end{remark}

\begin{theorem}\label{T:1stOrder}
Assume (H1).

(a)  Let $h$ be l.s.c., proper, and bounded from below. Then the value function $v_0$ is the  unique l.s.c.~minimax solution of \eqref{E:TVP0} that is bounded from below.

(b) Assume 
(H2). 
 Let $\ell$ be continuous and finite-valued. Let
 $h$ be  u.s.c.~and 
 bounded.
 Then  $v_0$ is the unique maximal 
bounded Dini subsolution of \eqref{E:TVP0}.
\end{theorem}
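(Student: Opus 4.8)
The plan is to treat both parts by the standard two-move scheme for value-function characterizations: first show $v_0$ is a solution of the relevant type, then squeeze every solution to $v_0$ by one-sided comparisons. I would first record the dynamic programming principle (DPP) $v_0(t_0,x_0)=\inf_{x\in\mathcal X^{1,1}(t_0,x_0)}[\int_{t_0}^{t_1}\ell(s,x'(s))\,ds+v_0(t_1,x)]$ for $t_0\le t_1\le T$ (``$\ge$'' is immediate and ``$\le$'' follows by concatenation; no measurable selection is needed for an infimum), the terminal identity $v_0(T,\cdot)=h$, and, under (H1)--(H2) with $h$ bounded, the boundedness of $v_0$ (use $a\equiv0$ with (H2) for an upper bound and $\ell\ge\inf\phi$, $h\ge-\norm{h}_\infty$ for a lower bound). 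For (a) I would also establish, by Tonelli's direct method, that $v_0$ is l.s.c.\ and that minimizers exist wherever $v_0$ is finite: the Nagumo superlinearity in (H1) bounds $\int\phi(\abs{x_n'})$, giving weak $L^1$-compactness of velocities (Dunford--Pettis) and uniform convergence of paths, while convexity and lower semicontinuity of $\ell$ together with l.s.c.\ of $h$ pass the cost to the limit. Given these, $v_0$ is an l.s.c.\ minimax solution almost for free: inequality \eqref{E:Subsolution} is exactly the backward DPP $v_0(t-\delta,x)\le\int_{t-\delta}^t\ell+v_0(t,x)$, and \eqref{E:Supersolution} holds with value $0$ along an optimal trajectory, where the DPP is an equality.

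For uniqueness in (a) let $u$ be any l.s.c.\ minimax solution bounded from below. The inequality $u\le v_0$ is the easy half: fixing $(t_0,x_0)$ and a near-optimal $x^\ast\in\mathcal X^{1,1}(t_0,x_0)$ and setting $g(t):=u(t,x^\ast)+\int_{t_0}^t\ell(s,(x^\ast)'(s))\,ds$, the subsolution inequality \eqref{E:Subsolution} read along $x^\ast$ says precisely that the upper-left Dini derivative of $g$ is everywhere nonnegative; since $g$ is l.s.c., the one-sided monotonicity lemma (proved by evaluating the sign of this derivative at a minimum point of $g$) forces $g$ to be nondecreasing, whence $u(t_0,x_0)=g(t_0)\le g(T)\le h(x^\ast)+\int_{t_0}^T\ell\le v_0(t_0,x_0)+\varepsilon$. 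The reverse inequality $u\ge v_0$ is the crux of the whole theorem.

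The proof of $u\ge v_0$ is the main obstacle. The supersolution inequality \eqref{E:Supersolution} only provides, at each point of $\mathrm{dom}(u)$, a single trajectory-germ and a sequence $\delta_k\downarrow0$ along which $u+\int\ell$ does not increase infinitesimally; this purely local ``weak decrease'' must be upgraded to a \emph{global} trajectory $y^\ast\in\mathcal X^{1,1}(t_0,x_0)$ along which $t\mapsto u(t,y^\ast)+\int_{t_0}^t\ell$ is nonincreasing. I would carry this out by a viability-type construction: on successively finer partitions, select a good germ at each node and follow it, obtaining $\eta$-approximate trajectories whose velocities stay equi-integrable via the Nagumo bound; extract a uniformly convergent limit $y^\ast$ by Dunford--Pettis; and use the lower semicontinuity of $u$ and of the cost to pass $u(T,y^\ast)+\int_{t_0}^T\ell\le u(t_0,x_0)$ to the limit. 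Then $u(t_0,x_0)\ge\int_{t_0}^T\ell+u(T,y^\ast)\ge\int_{t_0}^T\ell+h(y^\ast)\ge v_0(t_0,x_0)$. The delicate points are the measurable selection of germs and the control of step sizes in the path-dependent (infinite-dimensional state) setting; this is the path-space analogue of the construction behind Theorem~5.1 of \cite{DM-F00}, and boundedness from below of $u$ is used to keep the limiting argument under control.

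Part (b) follows the same template in the Dini (constant-direction) framework. Here $v_0$ is bounded as above and u.s.c.: writing $v_0(t_0,x_0)=\inf_a[\int_{t_0}^T\ell(s,a(s))\,ds+h(x_0(\cdot\wedge t_0)+\int_{t_0}^{\cdot\vee t_0}a)]$ exhibits it as an infimum of functions that are u.s.c.\ in $(t_0,x_0)$ (the path map is $\mathbf{d}_\infty$-continuous, $h$ is u.s.c., and the integral term is continuous in $t_0$), and an infimum of u.s.c.\ functions is u.s.c. That $v_0$ is a Dini subsolution is the forward DPP with the constant control $a$, which gives $d_+v_0(t_0,x_0)(1,a)\ge-\ell(t_0,a)$ for every $a$ once continuity of $\ell$ is used to pass $\delta^{-1}\int_{t_0}^{t_0+\delta}\ell(s,a)\,ds\to\ell(t_0,a)$, i.e.\ \eqref{E:ContSubsolution}. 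Maximality---every bounded Dini subsolution $v$ satisfies $v\le v_0$---comes again from a monotonicity lemma, now applied along \emph{piecewise-linear} near-optimal trajectories: on each linear piece \eqref{E:ContSubsolution} with the constant velocity makes the upper-right Dini derivative of $\Psi(t):=v(t,x)+\int_{t_0}^t\ell$ nonnegative, so the u.s.c.\ monotonicity lemma (proved at a maximum of $\Psi$) gives $v(t_0,x_0)\le\int_{t_0}^T\ell+v(T,x)\le\int_{t_0}^T\ell+h(x)$; approximating an arbitrary near-optimal $W^{1,1}$ trajectory by piecewise-linear ones---convexity of $\ell$ prevents the cost from increasing in the limit, while continuity of $\ell$ and u.s.c.\ of $h$ control the remaining terms---yields $v\le v_0$, and uniqueness of the maximal subsolution is then automatic.
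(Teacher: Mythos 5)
Your overall architecture (verify $v_0$ is a solution via DPP and Tonelli existence, then squeeze every solution by one-sided comparisons; easy half via a Dini-derivative monotonicity lemma along near-optimal trajectories) matches the paper, which packages the same content as equivalence lemmas (Lemmata~\ref{L:MinimaxSub} and \ref{L:MinimaxSuper}) between the infinitesimal inequalities and global integral inequalities. Your easy half of (a) is correct. But there is a genuine gap exactly where you locate the crux: the globalization of the supersolution property, $u\ge v_0$. Your partition-based ``viability'' construction does not work as described, because \eqref{E:Supersolution} yields, at a point of $\mathrm{dom}(u)$, a trajectory and a decrease of $u+\int\ell$ only along \emph{some} sequence $\delta_k\downarrow 0$ depending on that point; there is no lower bound on how far the germ can be followed, so ``select a good germ at each node of a prescribed partition and follow it to the next node'' is not justified even for a single step, and refining the partition does not help. (Your worry about measurable selection, by contrast, is a red herring: in this deterministic setting one germ is chosen at a time.) The paper's mechanism, in part (i) of Lemma~\ref{L:MinimaxSuper}, is a continuous-induction/maximal-time argument that is precisely designed to absorb the absence of a step-size bound: one first uses Tonelli existence of a minimizer to upgrade the assumed failure to a uniform $\eps$-gap \eqref{E:ProofLemmaSupersolution}; one then considers the set $S$ of pairs $(t,x)$ reachable with slack $\frac{t-t_0}{t_1-t_0}\,\eps$, shows that $\tilde s:=\sup\{t:(t,x)\in S\}$ is \emph{attained} by compactness (the slack inequality plus the lower bound on $u$ bounds $\int\ell$, the Nagumo condition gives equi-integrability of velocities, and Cesari's closure theorem together with lower semicontinuity of $u$ passes the slack inequality to the limit), and finally applies \eqref{E:Supersolution} at $(\tilde s,\tilde x)$ to produce \emph{some} $\delta>0$ extending membership in $S$ beyond $\tilde s$, a contradiction. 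This attained-supremum-plus-extension step is the missing idea; without it your approximate trajectories are never constructed on all of $[t_0,T]$.

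A secondary, smaller issue is the last step of (b). Iterating \eqref{E:ContSubsolution} along piecewise-linear paths via the u.s.c.\ monotonicity lemma is fine, but the passage from piecewise-linear to general $W^{1,1}$ competitors is under-specified: Jensen's inequality on partition intervals compares $\ell(s,\bar a_i)$ with averages of $\ell(r,x'(s))$ at a \emph{different} time $r$, and for merely continuous time-dependent $\ell$ this error is not controlled when $|x'|$ is unbounded. The paper instead splits this into two steps: the absence of the Lavrentiev phenomenon (Remark~\ref{R:NoLavrentiev}, proved by derivative truncation, where convexity of $\ell(t,\cdot)$ against the point $0$ and (H2) keep the cost from increasing), reducing to bounded velocities, and then piecewise-constant approximation of bounded controls (the ingredient borrowed from \cite{BLT} in the proof of Theorem~\ref{T:TVPn}), where continuity and finiteness of $\ell$ give a uniform dominating bound on compact velocity balls and u.s.c.\ of $h$ handles the terminal term. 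Your stated ingredients (convexity, continuity of $\ell$, u.s.c.\ of $h$) are the right ones, but they must be deployed in this order rather than through interpolation.
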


\begin{proof}
See Section~\ref{S:1stOrder}.
\end{proof}

\section{Proof of the convergence result}\label{S:Convergence}
Consider the  semicontinuous envelopes $v_\ast$ and $v^\ast$ defined by
\begin{align*}
v_\ast(t_0,x_0)&:=
\sup\limits_{\substack{\delta> 0,\\ n\in\N}}\quad
 \inf\limits_{\substack{(t,x)\in O_\delta(t_0,x_0),\\m\ge n}}\quad v_m(t,x),\\
 v^\ast(t_0,x_0)&:=
\inf\limits_{\substack{\delta> 0,\\ n\in\N}}\quad
 \sup\limits_{\substack{(t,x)\in O_\delta(t_0,x_0),\\m\ge n}}\quad v_m(t,x)
\end{align*}
for every $(t_0,x_0)\in [0,T]\times \Omega$. Here, $O_\delta(t_0,x_0)$ is the open $\delta$-neighborhood of
$(t_0,x_0)$ in $([0,T]\times \Omega,\mathbf{d}_\infty)$.

First, we establish an auxiliary result.
\begin{lemma}\label{L:Closure}
Assume (H1). Let $t_n\to t_0$ in $[0,T]$ as $n\to\infty$.
Consider a probability space $(\tilde{\Omega},\tilde{\cF},\tilde{P})$.
Let $(a_n)$ be a sequence in $L^1=L^1([0,T]\times\tilde{\Omega},dt\otimes d\tilde{P};\R^d)$
that converges weakly to some $a\in L^1$.
 Then
$\Mean^{\tilde{P}} \int_{t_0}^T \ell(t,a(t))\,dt\le
\varliminf_n \Mean^{\tilde{P}}\int_{t_n}^T \ell(t,a_n(t))\,dt$.
\end{lemma}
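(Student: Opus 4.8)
The plan is to prove a lower semicontinuity result for the integral functional under weak $L^1$ convergence, combined with the fact that the time interval of integration shrinks (since $t_n \to t_0$). I would first reduce to a statement purely about lower semicontinuity of a convex integrand, then handle the moving lower endpoint separately by a truncation/extension argument.

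\textbf{Step 1: Reduce to a fixed domain.} Define $b_n := a_n \mathbf{1}_{[t_n,T]}$ and $b := a\mathbf{1}_{[t_0,T]}$. The first issue is whether $b_n \rightharpoonup b$ weakly in $L^1$. Since $a_n \rightharpoonup a$ and $t_n \to t_0$, for any bounded test function $\psi$ on $[0,T]\times\tilde\Omega$ one checks that $\Mean^{\tilde P}\int_0^T \psi \cdot (a_n \mathbf{1}_{[t_n,T]} - a\mathbf{1}_{[t_0,T]})\,dt$ splits into a term tested against $a_n - a$ over $[t_0,T]$ (vanishing by weak convergence) plus a term supported on the shrinking symmetric difference $[t_n,t_0]\triangle\emptyset$ involving $a_n$; controlling the latter requires that $(a_n)$ be uniformly integrable on this small interval. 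Weak $L^1$ convergence of $(a_n)$ forces, by the Dunford--Pettis theorem, uniform integrability of $\{a_n\}$, so the contribution over $[\min(t_n,t_0),\max(t_n,t_0)]$ tends to $0$. Hence $b_n \rightharpoonup b$ in $L^1$, and it suffices to show
\begin{align*}
\Mean^{\tilde P}\int_0^T \ell(t,b(t))\,dt \le \varliminf_n \Mean^{\tilde P}\int_0^T \ell(t,b_n(t))\,dt,
\end{align*}
where I extend $\ell(t,\cdot)$ to be handled by noting $\ell(t,0)$ is integrable by (H2), so on $[0,t_n)$ the integrand value $\ell(t,0)$ contributes a negligible amount as $t_n\to t_0$; this lets me align the two functionals up to a vanishing error.

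\textbf{Step 2: Apply a convex lower-semicontinuity theorem.} Now I invoke the classical result (Ioffe's theorem, or the De Giorgi--Ioffe lower semicontinuity theorem for normal convex integrands) which states that if $g(t,\cdot)$ is l.s.c., proper, and convex for a.e.\ $t$, is measurable in $t$, and is bounded below by an integrable function plus a superlinear coercive term, then the functional $c \mapsto \Mean^{\tilde P}\int_0^T g(t,c(t))\,dt$ is sequentially weakly lower semicontinuous on $L^1$. Hypothesis (H1) supplies exactly the convexity and lower semicontinuity of $\ell(t,\cdot)$, while the Tonelli--Nagumo bound $\ell(t,a)\ge \phi(|a|)$ with $\phi(r)/r\to\infty$ gives the superlinearity needed for coercivity (and ensures weak $L^1$ sequential compactness arguments apply). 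The boundedness below of $\phi$ together with (H2) handles integrability of the lower bound.

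\textbf{Main obstacle.} The delicate point is Step~1: the moving lower limit $t_n$ in the integral interacts with the weak convergence, and weak $L^1$ convergence does not control pointwise or strong behavior, so the contribution over the shrinking interval $[t_n \wedge t_0, t_n\vee t_0]$ is not obviously negligible. The resolution is precisely uniform integrability of $\{a_n\}$ (automatic from weak $L^1$ convergence via Dunford--Pettis), which makes $\Mean^{\tilde P}\int_{t_n\wedge t_0}^{t_n\vee t_0}|a_n|\,dt \to 0$ and simultaneously lets me discard the $\ell(t,0)$-type boundary terms using (H2). Once the problem is honestly reduced to fixed-domain weak $L^1$ lower semicontinuity of a convex normal integrand, the cited Ioffe-type theorem finishes the argument; I would be careful to verify its measurability/normality hypotheses, which follow from $\ell$ being a measurable function that is l.s.c.\ and convex in $a$.
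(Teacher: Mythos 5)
Your overall strategy -- reduce to a fixed time interval and then invoke weak $L^1$ lower semicontinuity of the convex integrand (Ioffe's theorem, which plays exactly the role of the closure theorem 10.8.ii in Cesari cited by the paper) -- is sound, and your Dunford--Pettis argument for $b_n\rightharpoonup b$ is correct. The genuine gap is that your Step 1 alignment relies on (H2), which the lemma does not assume: you need $\ell(\cdot,0)$ to be integrable both to show $\int_0^{t_n}\ell(t,0)\,dt\to\int_0^{t_0}\ell(t,0)\,dt$ and, crucially, to \emph{subtract} these terms (which must be finite) from the fixed-domain inequality in order to recover the claimed one. Under (H1) alone, nothing forces $0$ to lie in the effective domain of $\ell(t,\cdot)$: for instance, in $d=1$, $\ell(t,a)=a^2+1/a$ for $a>0$ and $\ell(t,a)=+\infty$ for $a\le 0$ is l.s.c., proper, convex, and satisfies the Tonelli--Nagumo bound, yet $\ell(t,0)\equiv+\infty$. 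For such $\ell$, your extensions $b_n=a_n\bfone_{[t_n,T]}$ and $b=a\bfone_{[t_0,T]}$ give $\Mean^{\tilde P}\int_0^T\ell(t,b_n(t))\,dt=\Mean^{\tilde P}\int_0^T\ell(t,b(t))\,dt=+\infty$ identically, so the fixed-domain inequality reads $\infty\le\infty$ and transfers no information whatsoever about $\Mean^{\tilde P}\int_{t_0}^T\ell(t,a(t))\,dt$ versus $\varliminf_n\Mean^{\tilde P}\int_{t_n}^T\ell(t,a_n(t))\,dt$. So what you have proved is the lemma under (H1) \emph{and} (H2) -- which incidentally suffices for its application in Lemma~\ref{L:LSC} -- but not the lemma as stated.

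The repair, which is the paper's own proof, is to arrange matters so that $\ell$ is never evaluated at an artificial extension value: fix $\delta>0$ and note that $[t_0+\delta,T]\subset[t_n,T]$ for all large $n$, whether $t_n$ approaches $t_0$ from above or below. Apply the weak lower semicontinuity (your Step 2; note it needs no coercivity, only the uniform lower bound $\ell\ge\inf\phi>-\infty$) on the \emph{fixed} interval $[t_0+\delta,T]$ directly to $a_n\rightharpoonup a$ -- no truncated sequence $b_n$ is needed -- and bound the leftover piece by $\Mean^{\tilde P}\int_{t_n}^{t_0+\delta}\ell(t,a_n(t))\,dt\ge-\eps$ for all large $n$, using only that $\ell$ is bounded below and that $t_0+\delta-t_n$ is small. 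This yields $\varliminf_n\Mean^{\tilde P}\int_{t_n}^T\ell(t,a_n(t))\,dt+\eps\ge\Mean^{\tilde P}\int_{t_0+\delta}^T\ell(t,a(t))\,dt$; letting $\delta\downarrow 0$ (monotone convergence applied to $\ell+M\ge 0$, where $-M$ is a lower bound for $\ell$) and then $\eps\downarrow 0$ finishes the proof. The instructive difference: your handling of the moving endpoint pushes the difficulty into evaluating $\ell$ at $0$, which costs (H2), whereas shrinking the domain from the inside costs only the one-sided bound that (H1) already provides.
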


\begin{proof}
We follow the lines of the proof of the corresponding deterministic closure theorem
10.8.ii in \cite{Cesari}.
First, note that, by lower semi-continuity and convexity of $\ell$,
for every $\delta>0$, we have
$\Mean^{\tilde{P}} \int_{t_0+\delta}^T \ell(t,a(t))\,dt\le\varliminf_n 
\Mean^{\tilde{P}} \int_{t_0+\delta}^T \ell(t,a_n(t))\,dt$ (more details can be found the proof of Lemma~A.1 of \cite{BLT}).
Next, fix $\eps>0$. 
Since $\ell$ is bounded from below,
there is a $\delta_0>0$ independent from $n$ such that, for all $\delta\in (0,\delta_0)$,
 we have
$\Mean^{\tilde{P}} \int_{t_n}^{t_0+\delta} \ell(t,a_n(t))\,dt 
>-\eps$ 
and thus
$\varliminf_n \Mean^{\tilde{P}} \int_{t_n}^T \ell(t,a_n(t))\,dt +\eps\ge
\Mean^{\tilde{P}} \int_{t_0+\delta}^T \ell(t,a(t))\,dt$. 
Again, as $\ell$ is bounded from below,  either
the right-hand side of 
the previous inequality
converges to $\Mean^{\tilde{P}} \int_{t_0}^T \ell(t,a(t))\,dt$
as $\delta\to 0$ or otherwise the left-hand side equals $\infty$.
This concludes the proof as $\eps$ was chosen arbitrarily.
\end{proof}

The following two statements  adapt
Theorem~2.2 in \cite{BLT} to our slightly more general setting. 

\begin{lemma}\label{L:LSC}
Assume (H1)  {\color{black}with $\phi(r)=\abs{r}^p$ for some $p>1$} and (H2).  
Let $h$ be l.s.c.~and bounded from below. Then $v_0\le v_\ast$.
\end{lemma}

\begin{proof}
Let $(t_0,x_0)\in [0,T]\times \Omega$. 
It suffices to consider the case $v_\ast(t_0,x_0)<\infty$.
Let $(t_n,x_n)_n$ be a sequence in $[0,T]\times \Omega$ that converges to $(t_0,x_0)$ in $\mathbf{d}_\infty$
and that satisfies $v_\ast (t_0,x_0)=\varliminf_n v_n(t_n,x_n)$.
Let $(a_n)$ be a sequence in $\mathcal{L}_b$ such that each $a_n$ 
belongs to $\mathcal{L}_b^{t_n}$ and 
is an $n^{-1}$-minimizer of $(\text{SOC}_n)$ with initial
data $(t_n,x_n)$.
 Then there exists a  subsequence  $(\theta_k)_k:=(t_{n_k},x_{n_k},n_k)_k$ of $(t_n,x_n,n)_n$ with
\begin{align*}
v_\ast(t_0,x_0)&=\lim_k \Mean_{\theta_k} \left[
\int^T_{t_{n_k}} \ell(t,a_{n_k}(t))\,dt+h(X+A^{a_{n_k}})
\right]
\end{align*}
and, for all $k\in\N$,
\begin{align*}
v_\ast(t_0,x_0)-1\le \Mean_{\theta_k}\int_{t_{n_k}}^T\ell(t,a_{n_k}(t))\,dt+
\Mean_{\theta_k}\, h(X+A^{a_{n_k}})\le v_\ast(t_0,x_0)+1.
\end{align*}
Since $\ell$ and $h$ are bounded from below, we can assume that
\begin{align*}
v^1=\lim_k \Mean_{\theta_k} \int_{t_{n_k}}^T\ell(t,a_{n_k}(t))\,dt\text{ and }
v^2=\lim_k \Mean_{\theta_k}\, h(X+A^{a_{n_k}})
\end{align*}
for some $v^1$, $v^2\in\R$ with $v_\ast(t_0,x_0)=v^1+v^2$
(cf.~Theorem~11.1.i and its proof in \cite{Cesari}).
As $\sup_k  \Mean_{\theta_k}\int_{t_{n_k}}^T\ell(t,a_{n_k}(t))\,dt<\infty$ and 
$a_{n_k}\vert_{[0,t_{n_k}]}=0$ for all $k\in\N$,
one  can  {\color{black} proceed} 
nearly exactly as in the proof of Lemma~A.1 in \cite{BLT}
{\color{black} to show that the probability measures $(P_{\theta_k}\circ (A^{a_{n_k}})^{-1})_k$}
are tight. {\color{black} Let us point out the differences to \cite{BLT}. Thanks to our additional requirement that
the function $\phi$ from Hypothesis (H1) satisfies
$\phi(r)=\abs{r}^p$ for some $p>1$, we can estimate $\int_0^T \abs{a_{n_k}(t)}^p\,dt$ instead of 
$\int_0^T \abs{a_{n_k}(t)}\,dt$ (cf.~with the first displayed equation in the proof of Lemma~A.1 in \cite{BLT})
and thus we can invoke Lemma~2 in \cite{Zheng85} to obtain tightness
(cf.~also with the proof of Lemma~3.13 in \cite{TanTouzi13})
 instead of using the Aldous tightness criterion (Theorem~16.11 in \cite{Kallenberg2nd}).}
Let us also note that the sequence  $(P_{\theta_k})$ is weakly convergent
because for each $\eta\in C_b(\Omega)$,
\begin{align*}
\Mean_{\theta_k}\, \eta=\Mean_{0,0}\, \eta\left(x_{n_k}(\cdot\wedge t_{n_k})
+ \frac{1}{\sqrt{n_k}}\, 
{\color{black}(X-X(t_{n_k}))\bfone_{[t_{n_k},T]}}
\right) \to\eta(x_0(\cdot\wedge t_0))
\end{align*}
as $k\to\infty$, i.e., a sequence of copies of $X$ converges in distribution to the constant
$x_0(\cdot\wedge t_0)$.
{\color{black} Consequently, the probability measures $(P_{\theta_k}\circ(A^{a_{n_k}},X)^{-1})_k$ are tight.}
Thus, by Skorohod's representation theorem, there exists a probability space
$(\bar{\Omega},\bar{\cF},\bar{P})$ with a sequence of $\Omega\times\Omega$-valued random variables
$(\bar{A}_{n_k},\bar{X}_{n_k})_k$ that satisfies 
$\bar{P}\circ(\bar{A}_{n_k},\bar{X}_{n_k})^{-1}=P_{\theta_k}\circ(A^{a_{n_k}},X)^{-1}$ 
for each  ${\color{black} k}\in\N$
and that converges (after passing to a subsequence), $\bar{P}$-a.s., to some random variable $(\bar{A}_0,\bar{X}_0)$.
Next, define a sequence $(\bar{a}_k)$ of 
$\R^d$-valued processes on $[0,T]\times\bar{\Omega}$ by  
$\bar{a}_k(t):=a_{n_k}(t,\bar{X}_{n_k})$. 
Again as in the proof of Lemma~A.1 in \cite{BLT}, one can deduce that
$(\bar{a}_k)$ is equiabsolutely integrable and
thus  has a subsequential weak limit  in $L^1([0,T]\times\bar{\Omega},dt\otimes d\bar{P};\R^d)$
that we denote by $\bar{a}_0$ and that satisfies, 
by Lemma~\ref{L:Closure},
\begin{align*}
\Mean^{\bar{P}}
\int_{t_0}^T \ell(t,\bar{a}_0(t))\,dt
\le\varliminf_i
\Mean^{\bar{P}}
\int_{t_{n_{k_i}}}^T \ell(t,\bar{a}_{k_i}(t))\,dt
=
\varliminf_i
\Mean_{\theta_{k_i}}
\int_{t_{n_{k_i}}}^T \ell(t,a_{n_{k_i}}(t))\,dt
\end{align*}
as well as   $\bar{A}_0(t)=\int_0^t \bar{a}_0(s)\,ds$, $\bar{P}$-a.s., for every $t\in [0,T]$. Moreover,
$\bar{A}_0\vert_{[0,t_0]}=0$, $\bar{P}$-a.s.
Hence, 
together with $h$ being l.s.c.~and $\bar{X}_0$=$x_0(\cdot\wedge t_0)$, $\bar{\Prob}$-a.s., we have
\begin{align}\label{E:LSC:Last}
v_\ast(t_0,x_0)=v^1+v^2\ge \Mean^{\bar{P}}\left[\int_{t_0}^T \ell(t,\bar{a}_0(t))\,dt
+h(x_0(\cdot\wedge t_0)+\bar{A}_0)\right].
\end{align}
To conclude the proof, it suffices to note that there exists some $x\in\mathcal{X}^{1,1}(t_0,x_0)$
such that the right-hand side \eqref{E:LSC:Last} is greater than or equal to
$\int_{t_0}^T \ell(t,x^\prime(t))\,dt+h(x$)
(cf.~also Remark 2.6 of \cite{HaussmannLepeltier90SICON}).
\end{proof}

\begin{lemma}\label{L:USC}
Assume (H1) and (H2). 
Let $h$ be u.s.c.~and bounded. Then $v^\ast\le v_0$.
\end{lemma}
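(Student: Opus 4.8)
The plan is to prove the reverse inequality $v^\ast \le v_0$ to the one established in Lemma~\ref{L:LSC}, and the natural strategy is essentially dual: instead of passing to the limit in a sequence of near-optimal stochastic controls, I would start from a near-optimal \emph{deterministic} control for $v_0(t_0,x_0)$ and show that it can be used to build admissible stochastic controls whose costs for $(\text{SOC}_n)$ converge to the deterministic cost. First I would reduce to the case $v_0(t_0,x_0)<\infty$ (otherwise there is nothing to prove, since $v^\ast$ is bounded above under the standing assumption that $h$ is bounded together with Remark~\ref{R:UniformBoundedness}). Given $\eps>0$, choose $x\in\mathcal{X}^{1,1}(t_0,x_0)$ with $\int_{t_0}^T \ell(t,x'(t))\,dt + h(x) \le v_0(t_0,x_0)+\eps$, and set $a:=x'\mathbf{1}_{[t_0,T]}$, which I would regard as a deterministic element of $\mathcal{L}_b^{t_0}$ after a truncation/approximation argument to ensure boundedness.

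The core of the argument is to take, for each $n$ and each nearby initial datum $(t',x')\in O_\delta(t_0,x_0)$, this same control $a$ (suitably shifted so that it starts at $t'$) as a competitor in $(\text{SOC}_n)$, giving the upper bound
\begin{align*}
v_n(t',x') \le \Mean_{t',x',n}\left[\int_{t'}^T \ell(t,a(t))\,dt + h(X+A^a)\right].
\end{align*}
Under $P_{t',x',n}$ the canonical process $X$ is $x'(\cdot\wedge t')$ plus a Brownian motion of variance $1/n$, so as $n\to\infty$ (and $\delta\downarrow 0$) the law of $X+A^a$ concentrates on the deterministic path $x_0(\cdot\wedge t_0)+A^a$, which is exactly the chosen $x$ on $[t_0,T]$. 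The deterministic running cost $\int_{t'}^T\ell(t,a(t))\,dt$ converges to $\int_{t_0}^T\ell(t,x'(t))\,dt$ since $a$ is a fixed integrable (indeed bounded) function and $t'\to t_0$. Taking the $\sup$ over $(t',x')\in O_\delta(t_0,x_0)$ and $m\ge n$, then the limit in $n$ and $\delta$, yields $v^\ast(t_0,x_0)\le \int_{t_0}^T\ell(t,x'(t))\,dt + h(x)\le v_0(t_0,x_0)+\eps$, and letting $\eps\downarrow 0$ finishes the proof.

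The main obstacle will be the treatment of the terminal term $\Mean_{t',x',n}\,h(X+A^a)$. Because $h$ is only assumed upper semi-continuous, I cannot simply pass to the limit under the expectation; instead I would invoke weak convergence of the laws $P_{t',x',n}\circ(X+A^a)^{-1}$ to the Dirac mass at $x$ together with the Portmanteau characterization, which gives $\varlimsup \Mean_{t',x',n}\,h(X+A^a) \le h(x)$ precisely because $h$ is u.s.c.~and bounded above. This is the step that forces the u.s.c.~hypothesis on $h$ and mirrors, in reverse, the role l.s.c.~played in Lemma~\ref{L:LSC}. A secondary technical point is handling the fact that an arbitrary $x\in\mathcal{X}^{1,1}$ has $x'\in L^1$ rather than $L^\infty$, so strictly speaking $a=x'$ need not lie in $\mathcal{L}_b$; I would deal with this by first approximating $x'$ by bounded controls (using the Tonelli--Nagumo growth (H1) and (H2) to control the cost of the truncation), or by a standard density argument, so that the competitor genuinely belongs to $\mathcal{L}_b^{t'}$.
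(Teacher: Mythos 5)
Your proposal is correct and follows essentially the same route as the paper: the paper likewise fixes a (truncated, hence bounded) deterministic control $a^N$ with finite cost, uses it as a competitor in $(\text{SOC}_n)$ along a sequence $(t_n,x_n)\to(t_0,x_0)$ realizing $v^\ast(t_0,x_0)$, passes to the limit in $n$ using that $h$ is u.s.c.~and bounded while the Gaussian fluctuations vanish, and then removes the truncation $N\to\infty$ via convexity of $\ell(t,\cdot)$ together with (H2) --- exactly the ingredients you name for your truncation step.
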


\begin{proof}
It suffices to follow the arguments of the first paragraph of the proof of Theorem 2.2 in \cite{BLT}
and make the obvious adjustments. 
For the convenience of the reader, we quickly go over it. Fix $(t_0,x_0)\in [0,T]\times\Omega$
and 
{\color{black}$a\in L^1(0,T;R^d)$ }
with $\int_0^T \ell(t,a(t))\,dt<\infty$. Given $N\in\N$, define
$a^N(t):=a(t)$ for $\abs{a(t)}\le N$ and $a^N(t):=(N/\abs{a(t)})\,a(t)$ for $\abs{a(t)}>N$.
Next, consider a sequence $(t_n,x_n)$ that converges to $(t_0,x_0)$ in $[0,T]\times\Omega$
and  satisfies $v^\ast(t_0,x_0)=
{\color{black}\varlimsup_n}
v_n(t_n,x_n)$. Then
\begin{align*}
&v^\ast(t_0,x_0)\le\varlimsup_n\Mean_{t_n,x_n,n}\left[
\int_{t_n}^T \ell(t,a^N(t))\,dt+h(X+A^{a^N}
{\color{black}(\cdot\vee t_n)}
-A^{a^N}(t_n))\right]\\
&\le\varlimsup_n  \int_{t_n}^T \ell(t,a^N(t))\,dt\\
&\, +{\color{black} \varlimsup_n \Mean_{0,0} \, h\left(x_{n}(\cdot\wedge t_{n})
+ \frac{1}{\sqrt{n}}\, 
{\color{black}(X-X(t_{n})\bfone_{[t_n,T]}})
+A^{a^N}
{\color{black}(\cdot\vee t_n)}
-A^{a^N}(t_n)\right)}\\
&\le \int_{t_0}^T \ell(t,a^N(t))\,dt +h(x_0(\cdot\wedge t_0)+A^{a^N}
{\color{black}(\cdot\vee t_0)}
-A^{a^N}(t_0))
\end{align*}
as $h$ is u.s.c.~as well as bounded and $\int_0^T \ell(t,a^N(t))\,dt<\infty$, which follows from $\int_0^T
\ell (t,a(t))\,dt<\infty$, (H2), and the convexity of $\ell(t,\cdot)$ (cf.~the first displayed equation after
(38) in \cite{BLT}). Finally, letting $N\to\infty$ concludes the proof  
as in \cite{BLT}.
\end{proof}

\begin{remark} [No Lavrentiev phenomenon]\label{R:NoLavrentiev}
Assume (H1) and (H2). 
Let $h$ be continuous and bounded. Then
\begin{align*}
v_0(t_0,x_0)=\inf_{x\in\mathcal{X}^{1,\infty}(t_0,x_0)} \left[
\int_{t_0}^T \ell(t,x^\prime(t))\,dt +h(x)
\right],
\end{align*}
where
\begin{align*}
\mathcal{X}^{1,\infty}(t_0,x_0):=\left\{
x\in \Omega:\, x\vert_{[0,t_0]}=x_0\vert_{[0,t_0]}\text{ and } x\vert_{[t_0,T]}\in W^{1,\infty}(t_0,T;\R^d)
\right\}.
\end{align*}
This result follows from the proofs  of Lemmata~\ref{L:LSC} and \ref{L:USC} but with
each $P_{t_0,x_0,n}$, $n\in\N$, replaced by the unique probability measure  
under which $X=x(\cdot\wedge t_0)$ a.s.
For a more direct proof, it suffices to slightly modify the proof of Proposition~4.1 in \cite{ButtazzoBelloni95}
(this  result is due to \cite{DeArcangelis89}), where the case $h\equiv 0$ is treated. 
\end{remark}

\begin{proof}[Proof of the first conclusion of Theorem~\ref{T:MainResult}]
By Lemmata~\ref{L:LSC} and \ref{L:USC}, $v_0\le v_\ast\le v^\ast\le v_0$. Thus $(v_n)$ converges to $v_0$ uniformly on compacta and $v_0$ is continuous
(cf.~Lemmata~V.1.5 and V.1.9 of \cite{BardiCapuzzoDolcetta} and keep Remark~\ref{R:UniformBoundedness} in mind).
\end{proof}

\section{Connections to BSDEs} \label{S:BSDE}
We present  parts of the theory of (convex) superquadratic BSDEs  from \cite{DrapeauEtAl13AOP}
that are relevant {\color{black} for 
 our work.}
Fix $(t_0,x_0)\in [0,T]\times\Omega$, $t_1\in [t_0,T]$, $n\in\N$, and a $\cF_{t_1}$-measurable random variable
$\xi:\Omega\to\R\cup\{\infty\}$. Consider the BSDE
\begin{equation}\label{E:BSDE}
\begin{split}
dY(t)&=-\sup_{a\in\R^d}[a\cdot Z(t)-\ell(t,a)]\,dt +Z(t)\,dX(t),
\,\text{on $[t_0,t_1]$, $P_{t_0,x_0,n}$-a.s.,}\\
Y(t_1)&=\xi.
\end{split}
\end{equation}

\begin{definition}
 A pair $(Y,Z)$ is a \emph{supersolution} of \eqref{E:BSDE} if
 \begin{itemize}
\item $Y:[t_0,t_1]\times\Omega\to\R$ is a c\`adl\`ag and $\F^{t_0,x_0,n}$-adapted process,
\item $Z:[t_0,t_1]\times\Omega\to\R^d$ is  an $\F^{t_0,x_0,n}$-predictable process
with $$\Mean_{t_0,x_0,n} \int_{t_0}^{t_1} \abs{Z(t)}^2\,dt<\infty,$$
\item   $(t,\omega)\mapsto \left[
\int_{t_0}^t Z(r)\,dX(r)
\right](\omega)$, $[t_0,t_1]\times\Omega\to\R$,
is an $(\F^{t_0,x_0,n},\Prob_{t_0,x_0,n})$-supermartingale,
and, 
\item for every $t$, $s\in [t_0,t_1]$ with $t\le s$, we have, $P_{t_0,x_0,n}$-a.s.,
\begin{align*}
Y(s)&\ge Y(t)+\int_t^s  \left\{-\sup_{a\in\R^d}[a\cdot Z(r)-\ell(r,a)]\right\}\,dr +\int_t^s Z(r)\,dX(r),\\
Y(t_1)&\ge \xi.
\end{align*}
\end{itemize}

 A pair $(Y,Z)$ is a \emph{minimal supersolution} of \eqref{E:BSDE} if
it is a supersolution of \eqref{E:BSDE} and, for every supersolution
$(\tilde{Y},\tilde{Z})$ of \eqref{E:BSDE}, we have $Y\le \tilde{Y}$, $dt\otimes dP_{t_0,x_0,n}$-a.e.
\end{definition}

\begin{proposition}\label{P:BSDE}
Assume (H1) and (H2). Let $\xi$ be bounded. Then  \eqref{E:BSDE} has a unique minimal supersolution
$(Y,Z)$ and we write
\begin{align}\label{E:BSDEvalue}
\mathcal{E}^{t_0,x_0,n}_{t,t_1}(\xi):=Y_t,\quad t\in [t_0,t_1].
\end{align}
\end{proposition}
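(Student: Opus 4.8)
The plan is to recognize \eqref{E:BSDE} as a convex backward stochastic differential equation whose generator is the Legendre--Fenchel conjugate of the Lagrangian, and then to read off the claim from the theory of minimal supersolutions of convex BSDEs developed in \cite{DrapeauEtAl13AOP}. Writing $g(t,z):=\sup_{a\in\R^d}[a\cdot z-\ell(t,a)]=\ell^\ast(t,z)$, the equation \eqref{E:BSDE} is precisely the BSDE with the ($Y$-independent) driver $g$ and terminal datum $\xi$, and its minimal-supersolution value process is the object denoted in \eqref{E:BSDEvalue}. The task thus reduces to checking that $g$ and $\xi$ satisfy the structural hypotheses under which \cite{DrapeauEtAl13AOP} yields a unique minimal supersolution.

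First I would record the qualitative properties of the driver. By (H1) the map $\ell(t,\cdot)$ is proper, l.s.c.\ and convex and grows superlinearly (Tonelli--Nagumo); therefore its conjugate $g(t,\cdot)$ is everywhere finite, convex and continuous on $\R^d$, which in particular covers the possibly super-quadratic growth in $z$ of interest here. Measurability of $\ell$ together with its normal-integrand structure gives joint measurability of $g$ in $(t,z)$, and the absence of $Y$-dependence makes convexity in $z$ the required (joint) convexity. Taking $a=0$ yields $g(t,z)\ge-\ell(t,0)$, and by (H2) the map $t\mapsto\ell(t,0)$ is integrable on $[0,T]$; hence the deterministic shift $\hat Y(t):=Y(t)+\int_t^{t_1}\ell(s,0)\,ds$ turns supersolutions $(Y,Z)$ of \eqref{E:BSDE} into supersolutions of the BSDE with nonnegative driver $g+\ell(\cdot,0)$ and the same terminal value $\xi$, in an order-preserving way that leaves the stochastic integral $\int Z\,dX$, and hence the admissibility (supermartingale) requirement, untouched. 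This lets me assume the positivity normalization of \cite{DrapeauEtAl13AOP}, while boundedness of $\xi$ supplies the terminal integrability.

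With the hypotheses verified I would invoke the existence theorem of \cite{DrapeauEtAl13AOP} to obtain a minimal supersolution of the normalized BSDE and then undo the shift to recover the minimal supersolution $(Y,Z)$ of \eqref{E:BSDE}, which justifies \eqref{E:BSDEvalue}. Uniqueness of $Y$ is immediate, since two minimal supersolutions dominate each other $dt\otimes dP_{t_0,x_0,n}$-a.e. For $Z$, I would use that $X|_{[t_0,t_1]}$ is (a scaling of) a Brownian motion under $P_{t_0,x_0,n}$: the value process $Y$ is then a special semimartingale whose continuous local-martingale part coincides with $\int_{t_0}^{\cdot}Z\,dX$, so $Z$ is determined $dt\otimes dP_{t_0,x_0,n}$-a.e.\ by $\langle Y,X\rangle$ via Brownian martingale representation.

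The main obstacle is the precise matching of our data with the framework of \cite{DrapeauEtAl13AOP}: one must confirm that superlinear growth of $\ell$ genuinely produces an everywhere finite, convex, l.s.c.\ conjugate $g$ (so that the super-quadratically growing driver is well posed), and that the positivity normalization and terminal integrability hold and are preserved under the shift. The most delicate bookkeeping is checking that this shift respects the admissibility/supermartingale clause in the definition of supersolution, so that minimality is transported faithfully between the shifted and the original problem.
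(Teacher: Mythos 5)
Your proposal takes essentially the same route as the paper: identify the driver as the convex conjugate $g(t,z)=\sup_{a\in\R^d}[a\cdot z-\ell(t,a)]$ and reduce everything to the minimal-supersolution theory for convex BSDEs of \cite{DrapeauEtAl13AOP} (the paper applies Theorem~4.17 there). Your structural checks are correct: (H1) makes $g(t,\cdot)$ finite, convex and continuous despite the super-quadratic growth, $g(t,z)\ge-\ell(t,0)$ with $\ell(\cdot,0)$ integrable by (H2), and your canonical-decomposition argument for the $dt\otimes dP_{t_0,x_0,n}$-a.e.\ uniqueness of $Z$ is the standard one (it is in fact already contained in the uniqueness statement of the cited theorem).

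There is, however, one hypothesis of that theorem that you never verify: the set of supersolutions of \eqref{E:BSDE} must be non-empty. The existence result of \cite{DrapeauEtAl13AOP} constructs the minimal supersolution as an essential infimum over the family of existing supersolutions, so it cannot be invoked on an empty family, and establishing non-emptiness is precisely what the paper's proof does first (via the proof of Theorem~3.4 and Theorem~A.1 in \cite{DrapeauEtAl16AIHP}) before applying Theorem~4.17; your phrase ``with the hypotheses verified'' skips exactly this step. Fortunately, in your normalized setting the gap closes in one line: for the shifted equation with non-negative driver $g+\ell(\cdot,0)$ and bounded $\xi$, the pair $Y\equiv\esssup\xi$, $Z\equiv 0$ is a supersolution, since the required inequality reduces to $\int_t^s[g(r,0)+\ell(r,0)]\,dr\ge 0$ and $g(r,0)=-\inf_a\ell(r,a)\ge-\ell(r,0)$; equivalently, $Y(t)=\esssup\xi-\int_t^{t_1}\ell(s,0)\,ds$ with $Z\equiv 0$ is a supersolution of the original equation. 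With this observation added, your argument is complete and matches the paper's.
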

\begin{proof}
By the proof of Theorem~3.4 in \cite{DrapeauEtAl16AIHP} and 
by Theorem~A.1 in \cite{DrapeauEtAl16AIHP},
the set of supersolutions of \eqref{E:BSDE} is non-empty.
Thus we can apply Theorem~4.17 in \cite{DrapeauEtAl13AOP},
which concludes the proof.
\end{proof}

From now on,  let $T=1$ in this section for the sake of simplicity. 
Fix $(t_0,x_0)\in [0,T]\times\Omega$. Recall 
that
$\mathcal{L}_b^{t_0}=\{a\in\mathcal{L}_b: a\vert_{[0,t_0)}=0\}$.
Given $a\in\mathcal{L}_b$, define $A^a:[0,T]\times\Omega\to\R^d$ by
$A^a(t,\om):=\int_0^t a(s,\om)\,ds$. 
We  also identify a control $a\in\mathcal{L}_b$ with the function
$\Omega\to L^0(\Omega,\cF_T)$, $\omega\mapsto a(\omega)$, where 
$[a(\omega)](t)=a(t,\omega)$.
Recall that $X(t,\om)=\om(t)$, $(t,\om)\in [0,T]\times\Omega$,
and that $\Mean_{t_0,x_0}=\Mean^{P_{t_0,x_0}}$.

For the next statement, we borrow the following constructions from \cite{BLT}.
Given $t\in [0,1)$,  $\omega_1$, $\omega_2\in\Omega$ with $\omega_2(0)=0$, a path
$\omega_1\odot_t\omega_2\in\Omega$ is defined by
\begin{align}\label{E:odot}
(\omega_1\odot_t \omega_2)(s):=\omega_1(s\wedge t)
+\sqrt{1-t}\,\omega_2\left(\frac{s-t}{1-t}\right)\bfone_{[t,1]}(s)
\end{align}
(note that in \cite{BLT} $\otimes$ is used instead of $\odot$) and a path $\omega_1^{(t)}\in\Omega$
is defined by
\begin{align*}
\omega_1^{(t)}(s):=\frac{1}{\sqrt{1-t}}\,
[\omega_1(t+s(1-t))-\omega_1(t)].
\end{align*}
Moreover, we use the function $\ell^{(t_0)}:[0,1]\times\R^d\to\R\cup\{\infty\}$ defined by
\begin{align}\label{E:ellt0}
\ell^{(t_0)}(t,a):=(1-t_0)\,\ell\left(t_0+t(1-t_0),\frac{a}{\sqrt{1-t_0}}\right).
\end{align}
Also set $(\omega_1\odot_1 \omega_2):=\omega_1$,  $\omega_1^{(1)}\equiv 0$, and
$\ell^{(1)}\equiv 0$.

\begin{lemma}\label{L:Rescaling}
Assume (H1) and (H2). Let $h$ be bounded.
Then
\begin{equation}\label{E:Rescaling}
\begin{split}
{\color{black} -v_1(t_0,x_0)}
=
\sup_{\check{a}\in\mathcal{L}_{b}} \Mean_{0,0}\left[
(-h)(x_0\odot_{t_0} [X+A^{\check{a}(X)}])
-\int_0^1 \ell^{(t_0)} (t,\check{a}(t,X))\,dt
\right].
\end{split}
\end{equation}
\end{lemma}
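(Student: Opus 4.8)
The plan is to show that the substitution implicit in the scaling formulas \eqref{E:odot} and \eqref{E:ellt0} sets up a bijective, measure-preserving correspondence between admissible controls for the two optimization problems, under which the integrand on the left transforms exactly into the integrand on the right. I would begin by recalling the key structural fact underlying the measure $P_{t_0,x_0}$: under $P_{t_0,x_0}$, the canonical process $X$ is frozen at $x_0$ on $[0,t_0]$ and evolves as a Brownian motion started at $x_0(t_0)$ on $[t_0,1]$. The Brownian scaling identity $s\mapsto \sqrt{1-t_0}\,B((s-t_0)/(1-t_0))$ therefore identifies the law of $X$ restricted to $[t_0,1]$ under $P_{t_0,x_0}$ with the image, under the map $\omega\mapsto x_0\odot_{t_0}\omega$, of the law of the time-rescaled Brownian path under $P_{0,0}$. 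Concretely, if under $P_{0,0}$ we write $\check X$ for the canonical process, then $x_0\odot_{t_0}\check X$ has the same law as $X$ under $P_{t_0,x_0}$; this is exactly the computation already carried out in the proof of Lemma~\ref{L:LSC} (the displayed weak-convergence identity with $n=1$). This identity is the engine of the whole argument.

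Next I would match up the controls. Given $a\in\mathcal{L}_b^{t_0}$ I would define the rescaled control $\check a$ by the same time-and-space change of variables that produced $\ell^{(t_0)}$ in \eqref{E:ellt0}, namely (up to the identification of a control with a path functional) $\check a(t,\omega):=\sqrt{1-t_0}\,a\bigl(t_0+t(1-t_0),\,x_0\odot_{t_0}\omega\bigr)$, and check that this is a bijection between $\mathcal{L}_b^{t_0}$ and $\mathcal{L}_b$ preserving boundedness and $\F$-progressivity (progressivity is preserved because $\odot_{t_0}$ is adapted to the obvious time change). The point of the factor $\sqrt{1-t_0}$ and the argument $t_0+t(1-t_0)$ is that, under the substitution $t=t_0+\tau(1-t_0)$, $dt=(1-t_0)\,d\tau$, one gets
\begin{align*}
\int_{t_0}^1 \ell(t,a(t,X))\,dt=\int_0^1 (1-t_0)\,\ell\Bigl(t_0+\tau(1-t_0),\tfrac{\check a(\tau,\check X)}{\sqrt{1-t_0}}\Bigr)\,d\tau=\int_0^1 \ell^{(t_0)}(\tau,\check a(\tau,\check X))\,d\tau,
\end{align*}
which is precisely the running-cost term on the right. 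Simultaneously the controlled path transforms correctly: one verifies that $X+A^{a(X)}$ under $P_{t_0,x_0}$ has the same law as $x_0\odot_{t_0}[\check X+A^{\check a(\check X)}]$ under $P_{0,0}$, so that the terminal term $(-h)(\cdots)$ matches as well. Taking expectations and using the law identity of the first paragraph then equates the two expected payoffs for corresponding controls, and taking the supremum over the (bijectively identified) control sets yields \eqref{E:Rescaling}.

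The main obstacle I expect is bookkeeping rather than conceptual: one must verify carefully that the controlled drift transforms correctly under the simultaneous time rescaling and the $\sqrt{1-t_0}$ Brownian scaling, i.e.\ that $A^{\check a}$ on the $P_{0,0}$ side really produces $\sqrt{1-t_0}$ times the rescaled version of $A^a$, so that adding it to $\check X$ reproduces the $\odot_{t_0}$-image of $X+A^{a(X)}$. This is where the boundedness of $a$ (hence of $\check a$, after the $\sqrt{1-t_0}$ rescaling) is used to keep everything well-defined and to justify passing the supremum through the identification. A secondary point is to confirm that the identification respects the constraint $a|_{[0,t_0)}=0$ encoded in $\mathcal{L}_b^{t_0}$: since only $X|_{[t_0,1]}$ is non-degenerate under $P_{t_0,x_0}$ and the cost integral starts at $t_0$, the values of $a$ on $[0,t_0)$ are irrelevant, which is consistent with $\check a$ ranging over all of $\mathcal{L}_b$. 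Once these transformations are checked, the equality \eqref{E:Rescaling} follows by taking expectations and suprema, exactly as the constructions borrowed from \cite{BLT} were designed to guarantee.
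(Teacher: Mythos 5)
Your proposal matches the paper's proof essentially step for step: the same rescaled control $\check a(t,\omega)=\sqrt{1-t_0}\,a\bigl(t_0+t(1-t_0),x_0\odot_{t_0}\omega\bigr)$, the same Brownian-scaling engine $X=x_0\odot_{t_0}X^{(t_0)}$ ($P_{t_0,x_0}$-a.s.) identifying the laws, and the same verification that the controlled drift rescales correctly, i.e.\ $(A^{a(x_0\odot_{t_0}X)})^{(t_0)}=A^{\check a(X)}$. The only cosmetic difference is that the paper avoids the literal ``bijection'' claim by proving two inequalities, with the reverse direction using the explicit inverse $a(t,\omega)=\frac{1}{\sqrt{1-t_0}}\,\check a\bigl(\frac{t-t_0}{1-t_0},\omega^{(t_0)}\bigr)\bfone_{[t_0,1]\times\{x_0\vert_{[0,t_0]}\}}(t,\omega\vert_{[0,t_0]})$ --- the correspondence is only bijective modulo irrelevant values of the controls, which is exactly the point you flag in your final paragraph.
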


The calculations in our  proof are essentially the same as those in the proof of Lemma~5.1 in \cite{BLT}.
Although of similar nature, the corresponding statements are different. This permits a 
more elementary proof in our case.

\begin{proof}[Proof of Lemma~\ref{L:Rescaling}]

One should keep in mind, that $X=x_0\odot_{t_0} X^{(t_0)}$, $P_{t_0,x_0}$-a.s.
We only   consider  the case $t_0<1$  as otherwise \eqref{E:Rescaling} is clearly satisfied.

 Step~1. Fix $a\in\mathcal{L}_b^{t_0}$. Note that
\begin{align*}
&\Mean_{t_0,x_0}\left[
\int_{t_0}^1 \ell(t,a({\color{black}t},x_0\odot_{t_0}X^{(t_0)}))\,dt
\right]=
\Mean_{0,0} \left[
\int_{t_0}^1 \ell(t,a({\color{black} t},x_0\odot_{t_0} X))\,dt
\right]\\
&=\Mean_{0,0}\left[
(1-t_0)\int_0^1 \ell(t_0+t(1-t_0),a(t_0+t(1-t_0),x_0\odot_{t_0} X))\,dt
\right]\\
&=\Mean_{0,0}\left[ 
\int_0^1\ell^{(t_0)} (t,\check{a}(t,X))\,dt
\right],
\end{align*}
where $\check{a}\in\mathcal{L}_b$ is defined by 
\begin{align*}
\check{a}(t,\omega):=\sqrt{1-t_0}\, a(t_0+t(1-t_0),x_0\odot_{t_0} \omega).
\end{align*}
Also note that
\begin{align*}
&\Mean_{t_0,x_0}\left[
(-h)(x_0\odot_{t_0}[
X+A^{a(x_0\odot_{t_0} X^{(t_0)}{\color{black})}}
]^{(t_0)})
\right]\\
&=\Mean_{0,0}\left[
(-h)(x_0\odot_{t_0}[X+(A^{a(x_0\odot_{t_0} X)})^{(t_0)}])
\right]\\
&=\Mean_{0,0}\left[
(-h)(x_0\odot_{t_0}[X+A^{\color{black}\check{a}}])
\right]
\end{align*}
because, for every $t\in [0,1]$, 
\begin{align*}
(A^{a(x_0\odot_{t_0} X)})^{(t_0)}{\color{black}(t)}=
\frac{1}{\sqrt{1-t_0}}\int_{t_0}^{t_0+t(1-t_0)} a(s,x_0\odot_{t_0} X)\,ds=\int_0^t \check{a}(s,X)\,ds.
\end{align*}
Consequently, the left-hand side of \eqref{E:Rescaling} is less than or equal to
the right-hand side of \eqref{E:Rescaling}.

Step~2. Fix $\check{a}\in\mathcal{L}_b$.
Define $a\in\mathcal{L}_b^{t_0}$ by
\begin{align*}
a(t,\omega):=\frac{1}{\sqrt{1-t_0}} \,
\check{a}\left(\frac{t-t_0}{1-t_0},\omega^{(t_0)}\right)\,\bfone_{[t_0,1]\times\{x_0\vert_{[0,t_0]}\}}
(t,\omega\vert_{[0,t_0]}).
\end{align*}
Going over the calculations in Step~1 backward, one can deduce  that the
right-hand side of \eqref{E:Rescaling} is less than or equal to
the left-hand side of \eqref{E:Rescaling}.
\end{proof}

The following  statement together with Theorem~\ref{T:MainResult} provide a
non-Markovian non-linear Feynman-Kac formula that
connects
maximal 
Dini subsolutions of path-dependent PDEs
with  minimal supersolutions of convex superquadratic BSDEs, for which 
we use  the notation \eqref{E:BSDEvalue}.

\begin{theorem}\label{T:BSDE}
Assume (H1) and (H2). Let $\ell$ be continuous and finite-valued. 
Let $h$ be bounded. 
Fix $(t_0,x_0)\in [0,T]\times\Omega$
and $n\in\N$. Then
  $v_n(t_0,x_0)=-\Mean_{t_0,x_0,n} [\mathcal{E}^{t_0,x_0,n}_{t_0,T}(-h)]$.
Moreover, for $P_{t_0,x_0,n}$-a.e.~$\omega\in\Omega$ and every $t\in [t_0,T]$, we have
 $v_n(t,\omega)=-\mathcal{E}^{t_0,x_0,n}_{t,T}(-h)(\omega)$.
\end{theorem}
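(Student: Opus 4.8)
The plan is to exploit the convex-duality structure linking the minimal supersolution operator $\mathcal{E}^{t_0,x_0,n}$ of the superquadratic BSDE \eqref{E:BSDE} with the control problem $(\text{SOC}_n)$, rather than to construct the control component $Z$ by hand. The starting observation is that the driver of \eqref{E:BSDE}, namely $g(t,z)=\sup_{a\in\R^d}[a\cdot z-\ell(t,a)]=\ell^\ast(t,z)$, is convex in $z$, and that by (H1) (convexity, lower semicontinuity and properness of $\ell(t,\cdot)$) the Fenchel--Moreau theorem gives $g^\ast(t,\cdot)=\ell^{\ast\ast}(t,\cdot)=\ell(t,\cdot)$. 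The continuity and finiteness of $\ell$ assumed here guarantee that $\ell^\ast$ is superlinear and that the driver falls within the class covered by Proposition~\ref{P:BSDE} and by the dual-representation results of \cite{DrapeauEtAl13AOP, DrapeauEtAl16AIHP}; since $h$ is bounded, so is the terminal datum $-h$.

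Next I would invoke the (conditional, time-$t$) dual representation of minimal supersolutions of convex BSDEs. Realizing the change of measure by Girsanov with respect to the $(P_{t_0,x_0,n})$-Brownian motion $\sqrt n\,(X-X_{t_0})$, each admissible tilt corresponds to a kernel $\beta$ under which $X$ acquires drift $\beta/\sqrt n$; with the identification $a:=\beta/\sqrt n$ the penalty $g^\ast$ becomes exactly $\ell(t,a_t)$ by the conjugacy computation above. This should yield, for every $t\in[t_0,T]$ and $P_{t_0,x_0,n}$-a.e.\ $\omega$,
\begin{equation*}
\mathcal{E}^{t_0,x_0,n}_{t,T}(-h)
=\esssup_{\mathbb{Q}}\Big(\Mean^{\mathbb{Q}}[-h(X)\mid\cF_t]-\Mean^{\mathbb{Q}}\Big[\int_t^T\ell(s,a_s)\,ds\ \Big|\ \cF_t\Big]\Big),
\end{equation*}
the supremum being over the measures $\mathbb{Q}$ associated with bounded kernels. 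Negating and comparing with the definition of $v_n$ then reduces the theorem to matching this family of measures with the admissible controls of $(\text{SOC}_n)$.

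That matching is what I expect to be the main obstacle. In $(\text{SOC}_n)$ the control acts on the driftless canonical path: the cost sees $h(X+A^{a})$ and $\ell(t,a(t,X))$ with $a$ evaluated along $X$, whereas in the Girsanov picture the drift enters the dynamics of the tilted process. The point to establish is that, as $a$ ranges over $\mathcal{L}_b^{t_0}$ and $\mathbb{Q}$ over the admissible tilts, the two descriptions realize the same family of joint laws of a driftless scaled Brownian path together with a bounded adapted drift, so that both optimizations have the same value; equivalently, that passing to the tilted measure is the probabilistic counterpart of the shift $X\mapsto X+A^{a}$. Here I would lean on the rescaling identity of Lemma~\ref{L:Rescaling}, or on a direct Girsanov computation, to handle this correspondence together with the $1/\sqrt n$ normalization; boundedness of $h$ and the continuity and finiteness of $\ell$ keep all the integrals and essential suprema well defined.

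Finally I would assemble the two assertions. The dynamic statement $v_n(t,\omega)=-\mathcal{E}^{t_0,x_0,n}_{t,T}(-h)(\omega)$ for all $t\in[t_0,T]$ follows by combining the time-consistency (flow property) of the operators $\mathcal{E}^{t_0,x_0,n}_{t,s}$ from \cite{DrapeauEtAl13AOP} with the dynamic programming principle for $v_n$, both read at the conditional level given $\cF_t$; the measurability is consistent because $v_n(t,\cdot)$ depends only on the path up to time $t$ and hence is $\cF_t$-measurable. Specializing to $t=t_0$, where $\cF_{t_0}$ is $P_{t_0,x_0,n}$-trivial so that $\mathcal{E}^{t_0,x_0,n}_{t_0,T}(-h)$ is a.s.\ the constant $-v_n(t_0,x_0)$, and taking $\Mean_{t_0,x_0,n}$ yields the first identity $v_n(t_0,x_0)=-\Mean_{t_0,x_0,n}[\mathcal{E}^{t_0,x_0,n}_{t_0,T}(-h)]$.
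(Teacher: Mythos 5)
Your proposal is correct and follows essentially the same route as the paper's proof: the paper likewise passes through the convex-dual (risk-measure) representation of the minimal supersolution, citing Lemma~5.1 and the duality (BBD) of \cite{BLT} — which are precisely your Fenchel-conjugacy/dual-representation step and your ``matching of Girsanov tilts with path shifts'' step — together with Lemma~\ref{L:Rescaling} to reduce general initial data $(t_0,x_0)$ to initial time $0$. The only real difference is cosmetic: the paper obtains the dynamic identity $v_n(t,\cdot)=-\mathcal{E}^{t_0,x_0,n}_{t,T}(-h)$ directly from a conditional version of \cite{BLT}'s Lemma~5.1 applied at each $(t,\omega)$, rather than from the flow property of $\mathcal{E}$ combined with a dynamic programming principle for $v_n$.
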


\begin{proof}
We prove the theorem only for the case $n=1$ and $T=1$.

{\color{black}We will employ expressions $\rho^f$, $f$ being a measurable function
from $[0,1]\times\R^d$ to $\R\cup\{\infty\}$ that satisfies (H1) and (H2) in place of $\ell$, 
from Section~2 of \cite{BLT}, which are defined by}
\begin{align*}
\color{black}
\rho^f(\xi):=\sup_{Q\in\mathcal{Q}} \Mean^Q\left[\xi-\int_0^1 f(s,a^Q(s))\,ds\right],\,
\text{$\xi:\Omega\to\R$  bounded and $\cF_T$-measurable,}
\end{align*}
{\color{black}
where $\mathcal{Q}$  is the set of all probability measures on $(\Omega,\cF_T)$
that are absolutely continuous with respect to $P_{0,0}$ and
$a^Q:[0,1]\times\Omega\to\R^d$ is the unique $\F$-progressive process with
$\int_0^1 \abs{a^Q(s)}^2\,ds<\infty$, $P_{0,0}$-a.s., that satisfies}
\begin{align*}
\color{black}
dQ=\exp\left(\int_0^1 a^Q(s)\,dX(s)-\frac{1}{2}\int_0^1 \abs{a^Q(s)}^2\,ds\right)\,dP_{0,0}
\end{align*}
{\color{black}(Section~2 of \cite{BLT}).}

{\color{black} First note that} the right-hand side of \eqref{E:Rescaling} equals
$\rho^{\ell^{(t_0)}}[(-h)(x_0\odot_{t_0} X{\color{black})}]$ {\color{black}
according to (BBD) in \cite{BLT}}. 
Moreover, by
a straight-forward adjustment of the proof of
 Lemma~5.1 in  \cite{BLT}, 
$\rho^{\ell^{(t)}}[(-h)(\omega\odot_t X{\color{black})}]={\color{black} Y(t,\omega)}$
for every $t\in [t_0,T]$ and $P_{t_0,x_0}$-a.e.~$\omega\in\Omega$.
In this context, note that Lemma~5.1 and Lemma~A.2, both in \cite{BLT}, formally require continuity of
$h$, which, however, is not necessary (cf.~the corresponding material in \cite{DrapeauEtAl16AIHP}).
Using Lemma~\ref{L:Rescaling}, we can deduce that $v_1(t,\cdot)=
{\color{black}-Y(t)}$,
$P_{t_0,x_0}$-a.s., for every $t\in [t_0,T]$.
\end{proof}

\section{The second order HJB equations}\label{S:2ndOrder}

\begin{lemma} \label{L:Subsolution2}
Let $n\in\N$.
Assume (H1).
 Let $\ell=\ell(t,a)$ be continuous in $t$.
A  bounded u.s.c.~function 
$u:[0,T]\times\Omega\to\R$ 
is a   Dini 
subsolution of 
\eqref{E:TVPn} 
if and only if $u(T,\cdot)\le h$ and, for every $(t_0,x_0)\in [0,T)\times\Omega$,
 $t\in (t_0,T]$, and  $a\in\R^d$, 
\begin{align}\label{E:LemmaSubsolution2}
u(t_0,x_0)\le \Mean_{t_0,x_0,n}\left[\int_{t_0}^t \ell(s,a)\,ds+u(t,X+A^a
{\color{black}(\cdot\vee t_0)}
-A^a(t_0))\right].
\end{align}
\end{lemma}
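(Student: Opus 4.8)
The plan is to prove both implications by exploiting the definition of the upper stochastic Dini derivative $d^{1,2}_+$ together with the flow/semigroup structure of the controlled Brownian motion under $P_{t_0,x_0,n}$. The key observation is that the right-hand side of \eqref{E:LemmaSubsolution2}, viewed as a function of $t$, is an expectation of $u$ along the perturbed path $X+A^a-A^a(t_0)$, and that the inner minimization in \eqref{E:Subsolution2} concerns precisely the infinitesimal behavior of this quantity at $t=t_0$.

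For the direction ``\eqref{E:LemmaSubsolution2} implies Dini subsolution'', I would fix $(t_0,x_0)\in [0,T)\times\Omega$ and $a\in\R^d$, set $t=t_0+\delta$ in \eqref{E:LemmaSubsolution2}, rearrange to
\begin{align*}
\frac{\Mean_{t_0,x_0,n}[u(t_0+\delta,X+A^a-A^a(t_0))-u(t_0,x_0)]}{\delta}
\ge -\frac{1}{\delta}\Mean_{t_0,x_0,n}\int_{t_0}^{t_0+\delta}\ell(s,a)\,ds,
\end{align*}
and take $\varlimsup_{\delta\downarrow 0}$. Since $\ell(\cdot,a)$ is continuous in $t$, the right-hand side converges to $-\ell(t_0,a)$, so passing to the limit yields $d^{1,2}_+u(t_0,x_0)(1,a,n^{-1}I_d)+\ell(t_0,a)\ge 0$. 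Taking the infimum over $a\in\R^d$ gives \eqref{E:Subsolution2}. The combination with the boundary condition $u(T,\cdot)\le h$ and u.s.c.~of $u$ shows $u$ is a Dini subsolution. This direction is essentially immediate.

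The converse ``Dini subsolution implies \eqref{E:LemmaSubsolution2}'' is the substantial part. Here I would fix $a\in\R^d$ and define, for each starting point, the function $\psi(s,\omega):=\Mean_{s,\omega,n}[\int_{s}^{t}\ell(r,a)\,dr+u(t,X+A^a-A^a(s))]$ with $t\in(t_0,T]$ fixed, or more cleanly use the Markov/flow property to reduce the claim to a one-dimensional differential inequality along $s\mapsto g(s):=\Mean_{t_0,x_0,n}[\int_{t_0}^{s}\ell(r,a)\,dr+u(s,X+A^a-A^a(t_0))]$. The idea is that the subsolution inequality \eqref{E:Subsolution2}, applied at each intermediate point with the direction $a$, forces $g$ to be essentially nondecreasing in $s$, so that $g(t_0)=u(t_0,x_0)\le g(t)$, which is exactly \eqref{E:LemmaSubsolution2}. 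Making this rigorous requires a comparison-type argument turning a pointwise lower Dini derivative condition into monotonicity; the standard tool is a lemma stating that if the lower right Dini derivative of a (lower semi-continuous) function is nonnegative on an interval, the function is nondecreasing. The technical care is that $g$ is built from the u.s.c.~function $u$ composed with a stochastic flow, so I would need the tower property $\Mean_{t_0,x_0,n}[\,\cdot\mid\cF_s]=\Mean_{s,X,n}[\,\cdot\,]$ to relate the Dini derivative of $g$ at a generic $s$ to $d^{1,2}_+u$ at the relevant (random) point, together with a measurable-selection or uniform-integrability argument to pass the expectation through the liminf.

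The main obstacle I anticipate is precisely this last point: transferring the pointwise (in $\omega$) infinitesimal subsolution inequality into a monotonicity statement for the expectation $g(s)$, since $\varlimsup$ and $\Mean$ do not commute in the favorable direction. Boundedness of $u$ (hence uniform integrability) and continuity of $\ell$ in $t$ should supply the needed control, and I would likely invoke a Dini-derivative monotonicity lemma rather than reprove it. A secondary subtlety is handling the endpoint $t=T$ via the condition $u(T,\cdot)\le h$ and ensuring the flow property holds up to the terminal time under $P_{t_0,x_0,n}$.
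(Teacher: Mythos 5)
Your first direction (that \eqref{E:LemmaSubsolution2} implies the Dini subsolution property) is correct and coincides with what the paper calls the straightforward part of the proof: since $\int_{t_0}^{t_0+\delta}\ell(s,a)\,ds$ is deterministic, rearranging and using continuity of $\ell(\cdot,a)$ gives $d^{1,2}_+u(t_0,x_0)(1,a,n^{-1}I_d)+\ell(t_0,a)\ge 0$ for every $a$, which is exactly \eqref{E:Subsolution2}.

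The converse, however, has a genuine gap, and it sits precisely at the point you flag but then wave away. Writing $g(s):=\Mean_{t_0,x_0,n}\bigl[\int_{t_0}^{s}\ell(r,a)\,dr+u(s,X+A^a-A^a(t_0))\bigr]$ and using the tower/flow property, the subsolution property yields, for each fixed $s$ and $P_{t_0,x_0,n}$-a.e.\ $\omega$, that $\varlimsup_{\delta\downarrow 0}F_\delta(\omega)\ge 0$, where $F_\delta(\omega)$ denotes the conditional difference quotient plus the running-cost term. But the upper right Dini derivative of $g$ at $s$ is $\varlimsup_{\delta\downarrow 0}\Mean_{t_0,x_0,n}[F_\delta]$, and the implication
\begin{equation*}
\varlimsup_{\delta\downarrow 0}F_\delta\ge 0 \ \text{a.s.} \quad\Longrightarrow\quad \varlimsup_{\delta\downarrow 0}\Mean_{t_0,x_0,n}[F_\delta]\ge 0
\end{equation*}
is false in general: the sequences $\delta_k(\omega)\downarrow 0$ realizing the pointwise $\varlimsup$ depend on $\omega$, and there need not exist any common sequence along which the inequality holds on a set of large measure. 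Uniform integrability does not help; reverse Fatou gives only the opposite inequality $\varlimsup_\delta\Mean[F_\delta]\le\Mean[\varlimsup_\delta F_\delta]$. A toy counterexample with uniformly bounded $F_\delta$: take $F_\delta=\bfone_{A_\delta}-\bfone_{A_\delta^c}$ where the sets $A_\delta$ have measure tending to $0$ but sweep the whole space as $\delta\downarrow 0$; then $\varlimsup_\delta F_\delta\equiv 1$ while $\Mean[F_\delta]\to -1$. So the differential inequality $D^+g\ge 0$ that your monotonicity lemma requires is simply not available from the hypotheses, and boundedness of $u$ plus continuity of $\ell$ cannot repair this.

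The paper's proof supplies exactly the missing device: it replaces deterministic time increments by $\omega$-dependent ones, i.e., stopping times. Assuming \eqref{E:LemmaSubsolution2} fails by $\eps>0$ at $(t_0,x_0)$ with horizon $t_1$, one considers the set $S$ of stopping times $\tau\le t_1$ satisfying the defect inequality with error proportional to $\Mean[\tau]-t_0$; upper semicontinuity and boundedness of $u$ show that chains in $S$ have upper bounds, so Zorn's lemma yields a maximal element $\tau_0$. Then a measurable selection theorem is applied to the multifunction $M(t,\omega)$ of admissible step sizes: the subsolution property \eqref{E:Subsolution2} together with continuity of $\ell(\cdot,a)$ guarantees each $M(t,\omega)$ contains a strictly positive element, while u.s.c.\ and boundedness of $u$ make $M$ compact-valued and upper semicontinuous, hence measurable, producing an $\cF_{\tau_0}$-measurable random step $\delta[\cdot]>0$. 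The stopping time $(\tau_0+\delta[\cdot])\wedge t_1$ then strictly dominates $\tau_0$ on $\{\tau_0<t_1\}$ and still lies in $S$, contradicting maximality unless $\tau_0=t_1$, which in turn contradicts the assumed failure. This Zorn-plus-measurable-selection mechanism is the rigorous substitute for the (false) commutation of $\varlimsup$ and $\Mean$ on which your plan relies; without it, your outline cannot be completed.
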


\begin{proof}
We adapt the proof of Theorem~V.3 in \cite{Subbotina06},
which is situated in a  Markovian context with bounded control spaces,
to  our non-Markovian setting  with the unbounded control space $\R^d$.

(i)  Let $u$ be a 
Dini subsolution of \eqref{E:TVPn}. Fix $(t_0,x_0)\in [0,T)\times\Omega$. Put $\Mean=\Mean_{t_0,x_0,n}$ and $P=P_{t_0,x_0,n}$.
Assume that there are  
$t_1\in (t_0,T]$ and  $a\in\R^d$, and $\eps>0$ such that
\begin{align}\label{E:Proof:Subsolution2:I}
\Mean\left[u(t_1,X+A^a
{\color{black}(\cdot\vee t_0)}
-A^a(t_0))-u(t_0,x_0)+\int_{t_0}^{t_1} \ell(s,a)\,ds\right]<-\eps.
\end{align} 
Consider the set $S$ of all $\F$-stopping times $\tau$ that satisfy 
$t_0\le\tau\le t_1$,  
$P$-a.s., and
\begin{align}\label{E:Proof:Subsolution2:II}
\Mean\left[u(\tau,X+A^a
{\color{black}(\cdot\vee t_0)}
-A^a(t_0))-u(t_0,x_0)+\int_{t_0}^\tau \ell(s,a)\,ds\right]\ge 
\frac{\Mean[\tau]-t_0}{t_1-t_0}\cdot(-\eps).
\end{align}
We equip $S$ with an order $\preccurlyeq$ defined by $\tau_1\preccurlyeq\tau_2$ if and only if
$\tau_1\le\tau_2$, 
$P$-a.s.
Now, let $\tilde{S}$ be a totally ordered non-empty subset of $S$.
Note that there exists a sequence $(\tau_k)$ in $\tilde{S}$ such that
$\Mean[\tau_k]\uparrow \sup_{\tau\in\tilde{S}} \Mean[\tau]$ as $k\to\infty$.
Since $\tilde{S}$ is totally ordered and $\Mean$ is linear, $(\tau_k)$ is increasing
and converges to $\tilde{\tau}:=\sup_k \tau_k$. We have $\tilde{\tau}\in S$
because
\begin{align*}
&\Mean\left[u(\tilde{\tau},X+A^a
{\color{black}(\cdot\vee t_0)}
-A^a(t_0))+\int_{t_0}^{\tilde{\tau}} \ell(s,a)\,ds\right]\\&\ge
\varlimsup_k\Mean\left[u(\tau_k,X+A^a
{\color{black}(\cdot\vee t_0)}
-A^a(t_0))+\int_{t_0}^{\tau_k}\ell(s,a)\,ds\right]\\
&\ge u(t_0,x_0)+
\frac{\Mean[\tilde{\tau}]-t_0}{t_1-t_0}\cdot(-\eps)
\end{align*}
thanks to \eqref{E:Proof:Subsolution2:II} as well as to $u$ being u.s.c and bounded.
Hence, by Zorn's lemma, $S$ has a maximal element $\tau_0$.
We show that $\tau_0=t_1$, 
$P$-a.s.
To this end, assume that 
$P(\tau_0<t_1)>0$.
Define a set-valued function $M$ on 
$\color{black} \mathrm{hypo}\,u\vert_{[t_0,t_1]\times\Omega}=
\{(t,\omega,y)\in [t_0,t_1]\times\Omega\times\R:\,y\le u(t,\omega)\}$
 by
\begin{align*}
& M(t,\omega{\color{black}, y}):=\Biggl\{
\delta\in [0,
{\color{black} t_1}
-t_0]:\, t+\delta\le 
{\color{black} t_1}\text{ and}
\\
&\quad
\Mean_{t,\omega,n}\left[
u(t+\delta,X+A^a
{\color{black}(\cdot\vee t)}
-A^a(t))-{\color{black}y}+
\int_t^{t+\delta} \ell(s,a)\,ds
\right]\ge \frac{\delta}{t_1-t_0}\cdot(-\eps)
\Biggr\}.
\end{align*}
The sets $M(t,\omega{\color{black}, y})$ are non-empty because they contain $0$
and they are compact
 because, for any sequence $(\delta_k)$ in 
 $[0,
 {\color{black} t_1}
 -t]$
that converges to some $\delta$, we have
\begin{align*}
&\Mean_{t,\omega,n}\left[
u(t+\delta,X+A^a
{\color{black}(\cdot\vee t)}
-A^a(t))+
\int_t^{t+\delta} \ell(s,a)\,ds
\right]\\ &\qquad \ge
\varlimsup_k  \Mean_{t,\omega,n}
\left[
u(t+\delta_k,X+A^a
{\color{black}(\cdot\vee t)}
-A^a(t))+
\int_t^{t+\delta_k} \ell(s,a)\,ds
\right]
\end{align*}
as $u$ is u.s.c.~and bounded. 
Moreover, for every sequence $(s_k,\omega_k{\color{black}, y_k})_k$ that belongs to
${\color{black} \mathrm{hypo}\,u\vert_{[t_0,t_1]\times\Omega}}$
and that converges to some  $(t,\omega{\color{black}, y})$
with respect to the metric 
$((\tilde{s}_1,\tilde{\omega}_1{\color{black},\tilde{y}_1}),(\tilde{s}_2,\tilde{\omega}_2{\color{black},\tilde{y}_2}))
\mapsto\abs{\tilde{s}_1-\tilde{s}_2}+
\norm{\tilde{\omega}_1-\tilde{\omega}_2}_\infty{\color{black}+\abs{\tilde{y}_1-\tilde{y}_2}}$ and
hence also with respect to 
${\color{black}
((\tilde{s}_1,\tilde{\omega}_1{\color{black},\tilde{y}_1}),(\tilde{s}_2,\tilde{\omega}_2{\color{black},\tilde{y}_2}))
\mapsto\mathbf{d}_\infty
((\tilde{s}_1,\tilde{\omega}_1),(\tilde{s}_2,\tilde{\omega}_2))+\abs{\tilde{y}_1-\tilde{y}_2}
}$,
 every sequence $(\delta_k)$ in 
 $[0,
 {\color{black} t_1}
 -t_0]$ 
with $\delta_k\in M(s_k,\omega_k{\color{black}, y_k})$ has a subsequential limit that belongs to
 $M(t,\omega,{\color{black} y})$
because, for every subsequential limit  $\delta$ of $(\delta_k)$ (there is at least one), we have,
by possibly passing to a subsequence,
\begin{align*}
&\Mean_{t,\omega,n}\left[
u(t+\delta,X+A^a
{\color{black}(\cdot\vee t)}
-A^a(t))+
\int_t^{t+\delta} \ell(s,a)\,ds
\right]\\ & =
{\color{black}\Mean_{0,0}}
\Biggl[
u\left((t+\delta,\omega(\cdot\wedge t)+
\frac{1}{\sqrt{n}} 
{\color{black}(X-X(t))\bfone_{[t,T]}}{\color{black})}
+A^a
{\color{black}(\cdot\vee t)}
-A^a(t)\right)\\ &\qquad\qquad+
\int_t^{t+\delta} \ell(s,a)\,ds
\Biggr] \\&
 \ge
\varlimsup_k
{\color{black}\Mean_{0,0}}\Biggl[
u\Biggl((s_k+\delta_k,\omega_k(\cdot\wedge s_k)+
\frac{1}{\sqrt{n}} 
{\color{black}(X-X(s_k))\bfone_{[s_k,T]}}{\color{black} )}
\\ &\qquad\qquad\qquad\qquad +A^a
{\color{black}(\cdot\vee s_k)}
-A^a(s_k)\Biggr)+
\int_{s_k}^{s_k+\delta_k} \ell(s,a)\,ds
\Biggr] 
\\ &=
\varlimsup_k  \Mean_{s_k,\omega_k,n}
\left[
u(s_k+\delta_k,X+A^a
{\color{black}(\cdot\vee s_k)}
-A^a(s_k))+
\int_{s_k}^{s_k+\delta_k} \ell(s,a)\,ds
\right].
\end{align*}
Hence, $M$ considered as a map from 
${\color{black}\mathrm{hypo}\,u\vert_{[t_0,t_1]\times\Omega}}$
equipped here with the metric 
$((s_1,\omega_1{\color{black}, y_1}),(s_2,\omega_2{\color{black}, y_2}))
\mapsto\abs{s_1-s_2}+\norm{\omega_1-\omega_2}_\infty{\color{black}+\abs{y_1-y_2}}$
into the set of all compact non-empty subsets of 
$[0,
{\color{black} t_1}
-t_0]$
is u.s.c.~(Theorem~2.2 on p.~31 in \cite{Kisielewicz91DI})
and thus measurable (Theorem~2.1 on p.~29 in \cite{Kisielewicz91DI}{\color{black}), i.e.,
all sets of the form $\{(t,\omega,y)\in 
\mathrm{hypo}\,u\vert_{[t_0,t_1]\times\Omega}:
\,M(t,\omega,y)\cap E\neq\emptyset\}$, $E$ being a closed subset
of $[0,
t_1
-t_0]$,
belong to $\mathcal{B}(\mathrm{hypo}\,u\vert_{[t_0,t_1]\times\Omega})$
(see p.~41 in \cite{Kisielewicz91DI}).}
Consequently, 
by Theorem~3.13 on p.~49 in \cite{Kisielewicz91DI},
there exists a ${\color{black}\mathcal{B}(\mathrm{hypo}\,u\vert_{[t_0,t_1]\times\Omega})}$-measurable selector
$\delta(\cdot,\cdot{\color{black},\cdot})$ of $M$
that satisfies $\abs{
{\color{black} t_1}
-t_0-\delta(t,\omega{\color{black},y})}=\mathrm{dist}(
{\color{black} t_1}
-t_0,M(t,\omega{\color{black}, y}))$
for every $(t,\omega{\color{black}, y})\in {\color{black} \mathrm{hypo}\,u\vert_{[t_0,t_1]\times\Omega}}$.
Therefore, 
the map 
\begin{align*}
\omega\mapsto\delta[\omega]&:=
\delta(\tau_0(\omega),\omega(\cdot\wedge\tau_0(\omega))+A^a({\color{black}
(\cdot\vee t_0)\wedge
}\tau_0(\omega))-A^a(t_0){\color{black},}\\
&\qquad\qquad{\color{black} 
u(\tau_0(\omega),\omega(\cdot\wedge\tau_0(\omega))+A^a((\cdot\vee t_0)\wedge \tau_0(\omega))-A^a(t_0))})
\end{align*}
is $\cF_{\tau_0}$-measurable. 
Also note that, since, by \eqref{E:Subsolution2} and the continuity of $\ell(\cdot,a)$,
every set $M(t,\omega{\color{black}, u(t,\omega)})$
contains a strictly positive element, we have the inequality
$\abs{
{\color{black} t_1}
-t_0-\delta(t,\omega{\color{black}, u(t,\omega)})}<\abs{
{\color{black} t_1}
-t_0}$ 
and hence $\delta(t,\omega{\color{black}, u(t,\omega)})>0$. 
 Thus,
by Lemma~V.3 in \cite{Subbotina06},
 $\tilde{\tau}_0:=
\tau_0+\delta[\cdot]$
is an $\F$-stopping time with $\tilde{\tau}_0>\tau_0$ on $\{\tau_0<t_1\}$.
Finally, {\color{black} since}
\begin{align*}
&\Mean\left[u(\tilde{\tau}_0,X+A^a
{\color{black}(\cdot\vee t_0)}
-A^a(t_0))+\int_{t_0}^{\tilde{\tau}_0} \ell(s,a)\,ds\right]
\\&=
\int_\Omega \Biggl(\Mean\left[u(\tilde{\tau}_0,X+A^a
{\color{black}(\cdot\vee t_0)}
-A^a(t_0))+\int_{\tau_0}^{\tilde{\tau}_0} \ell(s,a)\,ds
\Bigg\vert\cF_{\tau_0}\right]
(\omega)\\ &\qquad\qquad
+\int_{t_0}^{\tau_0(\omega)} \ell(s,a)\,ds\Biggr)\,P(d\omega)
\\&=\int_\Omega 
\Biggl(\Mean_{\tau_0(\omega),\omega,n}
\left[u(\tilde{\tau}_0(\om),X+A^a
{\color{black}(\cdot\vee t_0)}
-A^a(t_0))+\int_{\tau_0(\omega)}^{\tilde{\tau}_0(\omega)} \ell(s,a)\,ds\right]
\\&\qquad\qquad+\int_{t_0}^{\tau_0(\omega)} \ell(s,a)\,ds\Biggr)\,P(d\omega)
\\&=\int_\Omega 
\Biggl(\Mean_{\tau_0(\omega),\omega+
A^a{\color{black}((\cdot\vee t_0)\wedge
\tau_0(\omega))}
-A^a(t_0),n}\\
&\qquad\qquad
\Biggl[u(\tilde{\tau}_0(\om),
X+A^a
{\color{black}(\cdot\vee \tau_0(\omega))}
-A^a(\tau_0(\omega)))
+\int_{\tau_0(\omega)}^{\tilde{\tau}_0(\omega)} \ell(s,a)\,ds\Biggr]
\\&\qquad\qquad\qquad\qquad
+\int_{t_0}^{\tau_0(\omega)} \ell(s,a)\,ds\Biggr)\,P(d\omega)
\end{align*}
{\color{black} and since, for each $\omega\in\Omega$, thanks to $\delta[\omega]\in M(\tau_0(\omega),
\omega(\cdot\wedge\tau_0(\omega))+
A^a((\cdot\vee t_0)\wedge \tau_0(\omega))
-A^a(t_0))
{\color{black}, u(\tau_0(\omega),\omega(\cdot\wedge\tau_0(\omega))+A^a((\cdot\vee t_0)\wedge \tau_0(\omega))-A^a(t_0))}
)$ and thanks to the definition of $M$,}
\begin{align*}
& \color{black} 
\Mean_{\tau_0(\omega),\omega+
A^a((\cdot\vee t_0)\wedge
\tau_0(\omega))
-A^a(t_0),n}
\Biggl[u(\tilde{\tau}_0(\om),
X+A^a
{\color{black}(\cdot\vee \tau_0(\omega))}
-A^a(\tau_0(\omega)))\\&
\color{black} 
\qquad\qquad\qquad\qquad+\int_{\tau_0(\omega)}^{\tilde{\tau}_0(\omega)} \ell(s,a)\,ds\Biggr]\\
&\color{black}=\Mean_{\tau_0(\omega),\omega(\cdot\wedge\tau_0(\omega))+
A^a((\cdot\vee t_0)\wedge \tau_0(\omega))
-A^a(t_0),n}\\
&\qquad
\Biggl[u(\tau_0(\om)+\delta[\omega],
X+A^a
{\color{black}(\cdot\vee \tau_0(\omega))}
-A^a(\tau_0(\omega)))\\&
\color{black} 
\qquad\qquad\qquad\qquad+\int_{\tau_0(\omega)}^{{\tau}_0(\omega)+\delta[\omega]} \ell(s,a)\,ds\Biggr]\\
&\color{black} \ge 
u(\tau_0(\omega),\omega(\cdot\wedge\tau_0(\omega))+A^a((\cdot\vee t_0)\wedge \tau_0(\omega))-A^a(t_0))
+\frac{\delta[\omega]}{t_1-t_0}\cdot (-\eps)\\
&\color{black} =
u(\tau_0(\omega),\omega+A^a(\cdot\vee t_0)-A^a(t_0))
+\frac{\delta[\omega]}{t_1-t_0}\cdot (-\eps),
\end{align*}
{\color{black} we have}
\begin{align*}
&{\color{black}\Mean\left[u(\tilde{\tau}_0,X+A^a
{\color{black}(\cdot\vee t_0)}
-A^a(t_0))+\int_{t_0}^{\tilde{\tau}_0} \ell(s,a)\,ds\right]}\\
&\ge \int_\Omega\Biggl(
u(\tau_0(\omega),\omega
+A^a({\color{black}\cdot\vee t_0})
-A^a(t_0))
-\frac{\delta[\omega]}{t_1-t_0} \eps 
+\int_{t_0}^{\tau_0(\omega)} \ell(s,a){\color{black}\,ds}
\Biggr)\,P(d\omega)
\\&\ge
u(t_0,x_0)+\frac{\Mean\left[\delta[\cdot]+\tau_0\right]-t_0}{t_1-t_0}\cdot(-\eps),
\end{align*}
{\color{black} where the last inequality follows from \eqref{E:Proof:Subsolution2:II} with $\tau=\tau_0$.
We can conclude that}
 $\tilde{\tau}_0\in S$.
Since  $\tilde{\tau}_0\ge \tau_0$ and $P(\{\tilde{\tau}_0>\tau_0\}\cap\{\tau_0<t_1  \})>0$, we have
$\tau_0\preccurlyeq\tilde{\tau}_0$ but not $\tau_0=\tilde{\tau}_0$, $P$-a.s.,
 i.e., we have a contradiction to $\tau_0$ being the maximal element in $S$.
Hence, $\tau_0=t_1$, $P$-a.s. However, this in turn contradicts \eqref{E:Proof:Subsolution2:I},
which concludes the proof of this direction.

(ii) Showing the remaining direction is straight-forward.
\end{proof}
\begin{theorem}\label{T:TVPn}
Assume (H1) and (H2).
Fix $n\in\N$. Let $h$ be u.s.c.~and bounded. 
Let $\ell$ be continuous and finite-valued.
Then $v_n$  
is the  unique  bounded maximal Dini subsolution of \eqref{E:TVPn}. 
Moreover, each bounded   Dini subsolution of \eqref{E:TVPn} is dominated from above by $v_n$ even if
we dispense with the assumption that $h$ is u.s.c.
\end{theorem}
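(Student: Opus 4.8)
My plan is to prove the theorem in three steps: first that $v_n$ is itself a Dini subsolution of \eqref{E:TVPn}, then that every bounded Dini subsolution is dominated by $v_n$ using only boundedness of $h$, and finally to assemble maximality and uniqueness from these two facts. The guiding tool throughout is the integral characterization in Lemma~\ref{L:Subsolution2}, which converts the infinitesimal subsolution inequality into a tractable expectation inequality for constant controls.

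\emph{Step 1 ($v_n$ is a Dini subsolution).} Boundedness of $v_n$ is Remark~\ref{R:UniformBoundedness}, and $v_n(T,\cdot)=h$ follows directly from the definition, since at $t_0=T$ every admissible control vanishes. For upper semicontinuity I would repeat the argument of Lemma~\ref{L:USC} with $n$ fixed: given $(t_k,x_k)\to(t_0,x_0)$ and a truncated constant control $a^N$, the u.s.c.~and boundedness of $h$ together with continuity of $\ell$ give $\varlimsup_k v_n(t_k,x_k)\le\int_{t_0}^T\ell(t,a^N)\,dt+h(x_0(\cdot\wedge t_0)+A^{a^N}-A^{a^N}(t_0))$, and optimizing over the control and letting $N\to\infty$ yields $\varlimsup_k v_n(t_k,x_k)\le v_n(t_0,x_0)$. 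With $v_n$ bounded and u.s.c., it then suffices to verify the integral inequality \eqref{E:LemmaSubsolution2} and invoke Lemma~\ref{L:Subsolution2}. That inequality is the elementary (sub-optimality) half of the dynamic programming principle: concatenating the constant control $a$ on $[t_0,t)$ with a measurably selected $\eps$-optimal continuation from $(t,X+A^a-A^a(t_0))$, and using the flow property of the family $(P_{t_0,x_0,n})$ together with the tower property, gives \eqref{E:LemmaSubsolution2} after $\eps\downarrow 0$.

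\emph{Step 2 (domination).} Now let $u$ be an arbitrary bounded Dini subsolution; here only boundedness of $h$ will be used. Since $u$ is u.s.c.~and bounded, Lemma~\ref{L:Subsolution2} applies, so for every constant $a\in\R^d$ and $t_0<t$ one has $u(t_0,x_0)\le\Mean_{t_0,x_0,n}[\int_{t_0}^t\ell(s,a)\,ds+u(t,X+A^a-A^a(t_0))]$. Conditioning on $\cF_s$ and using that the regular conditional law of $P_{t_0,x_0,n}$ given $\cF_s$ is $P_{s,\cdot,n}$ (the Markov/flow property of the construction), this extends to controls that are constant on a time step with an $\cF$-measurable value; iterating over the nodes of a partition and applying the tower property, I would obtain, for every simple ($\F$-adapted, piecewise constant in time) control $a\in\mathcal{L}_b^{t_0}$,
\begin{align*}
u(t_0,x_0)\le\Mean_{t_0,x_0,n}\Big[\int_{t_0}^T\ell(s,a(s))\,ds+u(T,X+A^a)\Big]\le\Mean_{t_0,x_0,n}\Big[\int_{t_0}^T\ell(s,a(s))\,ds+h(X+A^a)\Big],
\end{align*}
where the last step uses $u(T,\cdot)\le h$ and needs no regularity of $h$.

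\emph{Step 3 (general controls and conclusion).} To remove the restriction to simple controls, given $a\in\mathcal{L}_b^{t_0}$ I would pick simple controls $a^{(m)}\to a$ in $L^2([0,T]\times\Omega,dt\otimes dP_{t_0,x_0,n})$ with $\sup_m\norm{a^{(m)}}_\infty<\infty$. Then $\Mean_{t_0,x_0,n}\int_{t_0}^T\ell(s,a^{(m)}(s))\,ds\to\Mean_{t_0,x_0,n}\int_{t_0}^T\ell(s,a(s))\,ds$ by continuity of $\ell$ and bounded convergence, while the terminal term is controlled by the viscosity: since the noise is nondegenerate, Girsanov's theorem expresses the law of $X+A^{a^{(m)}}$ under $P_{t_0,x_0,n}$ as the law of $X$ reweighted by a stochastic exponential, and $L^2$-convergence of the uniformly bounded drifts gives $L^1$-convergence of these exponentials, hence total-variation convergence of the laws and therefore $\Mean_{t_0,x_0,n}[h(X+A^{a^{(m)}})]\to\Mean_{t_0,x_0,n}[h(X+A^a)]$ for the merely bounded measurable $h$. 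Passing to the limit gives $u(t_0,x_0)\le\Mean_{t_0,x_0,n}[\int_{t_0}^T\ell(s,a(s))\,ds+h(X+A^a)]$, and infimizing over $a$ yields $u\le v_n$. Finally, applying this with $h$ additionally u.s.c.~shows $v_n$ dominates every Dini subsolution, so by Step~1 it is a maximal Dini subsolution; and if $w$ is any bounded maximal Dini subsolution then $w\le v_n$ by domination while $v_n\le w$ by the maximality of $w$ applied to the subsolution $v_n$, whence $w=v_n$.

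\emph{Main obstacle.} The technical heart will be Steps~2--3, namely upgrading the constant-control inequality of Lemma~\ref{L:Subsolution2} to arbitrary controls. This requires the conditioning/flow argument to reach simple controls and, decisively, the Girsanov smoothing to pass to the $L^2$-limit without any continuity of $h$; this is precisely what permits the terminal datum to be merely bounded and measurable in the ``moreover'' statement.
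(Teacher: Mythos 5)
Your overall skeleton (integral characterization via Lemma~\ref{L:Subsolution2}, iteration over a partition for piecewise-constant controls, then maximality/uniqueness by soft arguments) is the same as the paper's, and your Step~2 essentially reproduces part~(ii) of the paper's proof. However, there are three genuine gaps. First, your upper semicontinuity argument for $v_n$ fails: Lemma~\ref{L:USC} cannot be ``repeated with $n$ fixed''. The limit you write, $\varlimsup_k v_n(t_k,x_k)\le\int_{t_0}^T\ell(t,a^N)\,dt+h(x_0(\cdot\wedge t_0)+A^{a^N}-A^{a^N}(t_0))$, copies the $n\to\infty$ limit of Lemma~\ref{L:USC}, where the noise $\tfrac{1}{\sqrt{n}}(X(t_k+\cdot)-X(t_k))$ vanishes; for fixed $n$ it does not vanish, and the correct bound is $\Mean_{t_0,x_0,n}[\int_{t_0}^T\ell(t,a^N(t))\,dt+h(X+A^{a^N}-A^{a^N}(t_0))]$. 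Worse, infimizing such bounds only over deterministic controls gives the \emph{open-loop} value, which in general strictly exceeds $v_n$ (the infimum defining $v_n$ runs over adapted controls in $\mathcal{L}_b^{t_0}$, and adaptedness genuinely helps when $h$ is not convex). This is exactly why the paper proves Lemma~\ref{L:Rescaling}: it transports the problem to the fixed measure $P_{0,0}$ so that, for each fixed adapted control $\check a\in\mathcal{L}_b$, the cost is u.s.c.\ in $(t_0,x_0)$ through the continuous map $(t,x)\mapsto x\odot_t\omega$, and $v_n$ is then u.s.c.\ as an infimum of u.s.c.\ functions. Second, your verification that $v_n$ satisfies \eqref{E:LemmaSubsolution2} rests on ``a measurably selected $\eps$-optimal continuation''; this measurable selection in path space (together with the uniform boundedness of the selected family, which is needed for the concatenated control to lie in $\mathcal{L}_b^{t_0}$) is a serious technical point, not a routine step. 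The paper deliberately avoids it by going through the BSDE representation (Theorem~\ref{T:BSDE}), using the flow property $\mathcal{E}^{t_0,x_0,n}_{t_0,T}(-h)=\mathcal{E}^{t_0,x_0,n}_{t_0,t}(\mathcal{E}^{t_0,x_0,n}_{t,T}(-h))$ of minimal supersolutions and a change-of-measure estimate to obtain \eqref{E:LemmaSubsolution2} without any selection argument.

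Third, your Step~3 is incorrect as stated. Since the control enters as $a^{(m)}(t,X)$, i.e.\ adapted to the \emph{noise} filtration rather than in feedback form along the controlled path, the law of $X+A^{a^{(m)}}$ under $P_{t_0,x_0,n}$ is \emph{not} Wiener measure reweighted by the stochastic exponential of $a^{(m)}$: one cannot in general recover $X$ from $X+A^{a^{(m)}}$, and the density of the image law is a stochastic exponential of the \emph{filtered} (innovations) drift $\Mean_{t_0,x_0,n}[a^{(m)}(t,X)\mid\cF^{X+A^{a^{(m)}}}_t]$. Convergence of these filtered drifts, and hence the $L^1$-convergence of densities/total-variation convergence you invoke, does not follow from $L^2$-convergence of the controls, because the conditioning $\sigma$-algebras change with $m$ (filter stability is a delicate matter). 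A correct substitute would be a uniform-integrability plus Dunford--Pettis argument giving setwise convergence of the laws, but that is a different proof from the one you wrote. The paper sidesteps this entire issue: instead of approximating a given control, it invokes (36) and (BBD) of \cite{BLT} to obtain an $\eps$-optimal control for $v_n(t_0,x_0)$ that is \emph{already} piecewise constant (in feedback form via the process $\tilde{X}^{s,x}$), so that iterating Lemma~\ref{L:Subsolution2} immediately yields $u(t_0,x_0)\le v_n(t_0,x_0)+\eps$ with no limit passage in the terminal term at all. In short: density of simple controls is used at the level of the value function (imported from \cite{BLT}), not at the level of individual controls, and this is what makes the ``moreover'' statement for merely bounded $h$ come for free.
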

\begin{proof}
(i) Existence and regularity: 
First, we show that $v_n$ is bounded and u.s.c.
Boundedness of 
$h$, (H1),  and  (H2)  
yield boundedness 
of $v_n$. 
{\color{black}
To establish upper semi-continuity, } 
we will use $\odot$ defined by  \eqref{E:odot} and the notation \eqref{E:ellt0}.
For this reason, we  assume that $T=1$ and $n=1$ 
 but this assumption is not restrictive.
{\color{black} We continue by fixing  a pair} $(t_0,x_0)\in [0,T]\times\Omega$ {\color{black}and considering} 
a  sequence $(t_k,x_k)_{k\ge 1}$ in $[0,T]\times\Omega$
that converges to $(t_0,x_0)$ and  satisfies
$\lim_k v_n(t_k,x_k)=\varlimsup_{(t,x)\to (t_0,x_0)} v_n(t,x)$.
{\color{black} We distinguish between two cases.

\textit{Case~1:} $t_0<1$. Fix  an $\eps<0$ such that $t_0+\eps<1$. We can assume, without loss of generality,
that $t_k<t_0+\eps$ for all $k\in\N$.
Let $a\in\mathcal{L}_b$. 
Then the map (see \eqref{E:ellt0})
\begin{align*}
(s,t,\omega)\mapsto \ell^{(s)}(t, a(t,\omega))=
(1-s)\,\ell\left(s+t(1-s),\frac{a(t,\omega)}{\sqrt{1-s}}\right)
\end{align*}
from $[0,t_0+\eps]\times [0,T]\times\Omega$ to $\R$
has a real upper bound
 due to $\ell$ being continuous
and $a$ being bounded.}
Note that
 $(t,x)\mapsto (x\odot_t \omega)$, $[0,T)\times\Omega\to\Omega$, is continuous
for every $\omega\in\Omega$ with $\omega(0)=0$.
Thus{\color{black}, by} 
Lemma~\ref{L:Rescaling}, 
\begin{align*}
&\lim_k v_n(t_k,x_k)\le 
{\color{black} \varlimsup_k \Mean_{0,0,n} \left[
h(x_k\odot_{t_k} [X+A^a])
+\int_0^T \ell^{(t_k)} (t,a(t))\,dt
\right]}\\
&\le 
\Mean_{0,0,n} \left[
h(x_0\odot_{t_0} [X+A^a])
+\int_0^T {\color{black}\ell^{(t_0)} (t,a(t))}
\,dt
\right].
\end{align*}
{\color{black} Since $a\in\mathcal{L}_b$ was arbitrary, we can deduce after again invoking 
Lemma~\ref{L:Rescaling} that $\lim_k v_n(t_k,x_k)\le v_n(t_0,x_0)$.} 

{\color{black}\textit{Case~2:} $t_0=1$. Then, proceeding similarly as in Case~1 but using the constant
control $(t,\omega)\mapsto a(t,\omega)=0$, we have}
\begin{align*}
& \color{black} \lim_k v_n(t_k,x_k)\le 
{\color{black} \varlimsup_k \Mean_{0,0,n} \left[
h(x_k\odot_{t_k} [X+A^a])
+\int_0^T \ell^{(t_k)} (t,0)\,dt
\right]}\\
& \color{black} \le 
h(x_0)=v_n(t_0,x_0).
\end{align*}
{\color{black} We can conclude that } $v_n$ is u.s.c.

Next, we establish the  subsolution property via the BSDE connection in Theorem~\ref{T:BSDE}.
To this end, we use the notation~\eqref{E:BSDEvalue} and fix $(t_0,x_0)\in [0,T]\times\Omega$,
$t\in [t_0,T]$, and $a\in\R^d$.
Note first that,
by Proposition~3.6~(1) in \cite{DrapeauEtAl13AOP}, 
\begin{align}\label{E:BSDE:DPP}
\mathcal{E}^{t_0,x_0,n}_{t_0,T}(-h)=\mathcal{E}^{t_0,x_0,n}_{t_0,t}\left(
\mathcal{E}^{t_0,x_0,n}_{t,T}(-h)
\right).
\end{align}
Since $v_n$ is bounded and, by Theorem~\ref{T:BSDE}, $v_n(t,\cdot)=-\mathcal{E}^{t_0,x_0,n}_{t,T}(-h)$,
$P_{t_0,x_0,n}$-a.s., we can apply
Theorem~3.4 in \cite{DrapeauEtAl16AIHP} (see also Lemma~A.2 in \cite{BLT}) together with (3) in \cite{BLT}
to  deduce that, with  $Q^a_{t_0,x_0,n}:=\Prob_{t_0,x_0,n}\circ (X+A^a
{\color{black}(\cdot\vee t_0)}
-A^a(t_0))^{-1}$,
\begin{align*}
&\Mean_{t_0,x_0,n}\left[
\mathcal{E}^{t_0,x_0,n}_{t_0,t}\left(
\mathcal{E}^{t_0,x_0,n}_{t,T}(-h)\right)
\right]\ge 
\Mean^{Q^a_{t_0,x_0,n}} \left[\mathcal{E}^{t_0,x_0,n}_{t,T}(-h)-\int_{t_0}^t \ell(s,a)\,ds\right]\\
&=\Mean_{t_0,x_0,n} \left[\mathcal{E}^{t_0,x_0,n}_{t,T}(-h)(X+A^a{\color{black}(\cdot\vee t_0)}
-A^a(t_0))-\int_{t_0}^t \ell(s,a)\,ds\right].
\end{align*}
Thus, by  \eqref{E:BSDE:DPP} and
Theorem~\ref{T:BSDE}, $v_n$ satisfies \eqref{E:LemmaSubsolution2} in place of $u$.
Hence, by Lemma~\ref{L:Subsolution2}, $v_n$ is a Dini subsolution of \eqref{E:TVPn}.

(ii) Uniqueness: Let $u$ be a bounded Dini subsolution of \eqref{E:TVPn}. Without loss of generality, let $n=1$.
Fix $(t_0,x_0)\in [0,T)\times\Omega$ and $\eps>0$. It suffices to show that $u(t_0,x_0)\le 
{\color{black}v_n}(t_0,x_0)+\eps$.
By (a slight modification) of (36) and (BBD), both in \cite{BLT}, there exists an $a\in\mathcal{L}_b^{t_0}$ 
with $a(t,\omega)=\sum_{i=1}^m a_i(\omega).\bfone_{[s_{i-1},s_i)}(t)$, where $t_0=s_0<s_1<\cdots<s_m=T$
and each $a_i:\Omega\to\R^d$ is $\cF_{s_{i-1}}$-measurable,
such that $\tilde{\Mean}_{t_0,x_0} \left[\int_{t_0}^T \ell(t,a(t))\,dt+h(X)\right]\le v_n(t_0,x_0)+\eps$.
Here, for any $(s,x)\in [t_0,T]\times\Omega$, 
 $\tilde{P}_{s,x}:=P_{s,x}\circ(\tilde{X}^{s,x})^{-1}$,
$\tilde{\Mean}_{{\color{black} s,x}}:=\Mean^{\tilde{P}^{s,x}}$, and
$\tilde{X}^{s,x}:[0,T]\times\Omega\to\R^d$ is the unique solution of
$\tilde{X}^{s,x}(t)=\tilde{X}^{s,x}(s)+\int_s^t a(r,\tilde{X}^{s,x})\,dr +X(t)-X(s)$, $t\in [s,T]$,
with initial condition $\tilde{X}^{s,x}\vert_{[0,s]}=x\vert_{[0,s]}$.
Next, note that $a_1\circ\tilde{X}^{t_0,x_0}=a_1\circ x_0=:\tilde{a}_1\in\R^d$ and
$X+A^{\tilde{a}_1}{\color{black}(\cdot\vee t_0)}
-A^{\tilde{a}_1}(t_0) =\tilde{X}^{t_0,x_0}$ on $[0,s_1]$, $P_{t_0,x_0}$-a.s.
Thus, by Lemma~\ref{L:Subsolution2},
\begin{align*}
u(t_0,x_0)&\le\Mean_{t_0,x_0}\left[\int_{t_0}^{s_1} \ell(t,\tilde{a}_1)\,dt +u(s_1,\tilde{X}^{t_0,x_0})\right]\\
&=\tilde{\Mean}_{t_0,x_0}\left[\int_{t_0}^{s_1} \ell(t,a(t))\,dt+u(s_1,X)\right].
\end{align*}
Also note that, for $P_{t_0,x_0}$-a.e.~$\omega\in\Omega$, we have, with $x_1=\tilde{X}^{t_0,x_0}(\omega)$,
$a_2\circ\tilde{X}^{s_1,x_1}=a_2\circ x_1\in\R^d$
and $X+A^{a_2\circ x_1}{\color{black}(\cdot\vee s_1)}
-A^{a_2\circ x_1}(s_1)=\tilde{X}^{s_1,x_1}$ on $[0,s_2]$, $P_{s_1,x_1}$-a.s.,
and thus, by Lemma~\ref{L:Subsolution2},
\begin{align*}
u(s_1,x_1)&\le\Mean_{s_1,x_1}\left[\int_{s_1}^{s_2} \ell(t,\tilde{a}_2)\,dt +u(s_2,\tilde{X}^{s_1,x_1})\right]\\
&=\tilde{\Mean}_{s_1,x_1}\left[\int_{s_1}^{s_2} \ell(t,a(t))\,dt+u(s_2,X)\right].
\end{align*}
Therefore
\begin{align*}
\tilde{\Mean}_{t_0,x_0}\left[u(s_1,X)\right]&\le
\int_\Omega
\tilde{\Mean}_{s_1,x_1}\left[\int_{s_1}^{s_2} \ell(t,a(t))\,dt+u(s_2,X)\right]
\Bigg\vert_{x_1=\omega}
\,\tilde{P}_{t_0,x_0}(d\omega)\\
&=\tilde{\Mean}_{t_0,x_0}\left(
\tilde{\Mean}_{t_0,x_0}\left[
\int_{s_1}^{s_2} \ell(t,a(t))\,dt+u(s_2,X)
\Bigg\vert\cF_{s_1}
\right]
\right)\\
&=\tilde{\Mean}_{t_0,x_0}\left[\int_{s_1}^{s_2} \ell(t,a(t))\,dt+u(s_2,X)\right].
\end{align*}
Repeating this procedure and noting that 
$u(T,\cdot)\le h$ yields 
\begin{align*}
u(t_0,x_0)\le \tilde{\Mean}_{t_0,x_0}  \left[\int_{t_0}^T \ell(t,a(t))\,dt+h(X)\right]\le v_n(t_0,x_0)+\eps,
\end{align*}
which concludes the proof.
\end{proof}

\section{The first order HJB equation}\label{S:1stOrder}

\begin{lemma} \label{L:MinimaxSuper}
Assume (H1). 
An l.s.c.~function $u:[0,T]\times\Omega\to\R\cup\{\infty\}$ 
bounded from below is a  
 minimax supersolution of \eqref{E:TVP0} if and only if
$u(T,\cdot)\ge h$ and, for every $(t_0,x_0)\in [0,T)\times\Omega$ 
and 
$t\in (t_0,T]$, there exists an $x\in\mathcal{X}^{1,1}(t_0,x_0)$ such that
\begin{align}\label{E:LemmaSupersolution}
u(t_0,x_0)\ge \int_{t_0}^t \ell(s,x^\prime(s))\,ds +u(t,x).
\end{align}
\end{lemma}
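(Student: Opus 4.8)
This is Lemma 6.1, an "integrated" reformulation of the infinitesimal minimax supersolution property. I need to show that for an l.s.c. function $u$ bounded from below with $u(T,\cdot)\ge h$, the infinitesimal condition

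$$\inf_{x\in\mathcal{X}^{1,1}(t_0,x_0)} \varliminf_{\delta\downarrow 0}\left[u(t_0+\delta,x) - u(t_0,x_0) + \int_{t_0}^{t_0+\delta}\ell(s,x'(s))\,ds\right]\delta^{-1}\le 0$$

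at every $(t_0,x_0)\in\mathrm{dom}(u)$ with $t_0<T$ is equivalent to the global statement that for every such $(t_0,x_0)$ (now all of $[0,T)\times\Omega$, not just the domain) and every $t\in(t_0,T]$ there exists a path $x$ with $u(t_0,x_0)\ge \int_{t_0}^t \ell(s,x'(s))\,ds + u(t,x)$.

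Let me think about the two directions.

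**The easy direction ($\Leftarrow$).** Suppose the integrated inequality holds. Take $(t_0,x_0)\in\mathrm{dom}(u)$ with $t_0<T$. For each small $\delta>0$, apply the hypothesis with $t=t_0+\delta$ to get a path $x_\delta\in\mathcal{X}^{1,1}(t_0,x_0)$ satisfying $u(t_0,x_0)\ge \int_{t_0}^{t_0+\delta}\ell(s,x_\delta'(s))\,ds + u(t_0+\delta,x_\delta)$, i.e. the difference quotient is $\le 0$ for this path. This is essentially immediate, but there is a subtlety: the infinitesimal condition asks for a single $x$ over which we take $\varliminf_{\delta\downarrow 0}$, whereas here the path $x_\delta$ may depend on $\delta$. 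I would need to concatenate or select a diagonal path. The cleanest route is to construct a single competitor $x$ by gluing: partition $(t_0,T]$ with a sequence $t_0 < \cdots < t_0 + \delta_k \downarrow t_0$ and build $x$ piecewise so that on each subinterval it follows an (approximately optimal) arc realizing the integrated inequality at the left endpoint; then along the sequence $\delta_k$ the difference quotient stays $\le 0$, giving $\varliminf_{\delta\downarrow 0}\le 0$ for that single $x$. I expect this gluing to be routine once set up, using absolute continuity and additivity of the integral.

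**The hard direction ($\Rightarrow$).** This is the substance. Assume $u$ is a minimax supersolution in the infinitesimal sense. Fix $(t_0,x_0)\in[0,T)\times\Omega$ and a target time $t\in(t_0,T]$; I must produce a path realizing the integrated inequality. The natural strategy is a \emph{viability/Zorn's-lemma argument} entirely parallel to the construction in the proof of Lemma 7.4 (Lemma~\ref{L:Subsolution2}) earlier in the paper, but now for the first-order deterministic problem rather than the second-order stochastic one. Concretely, I would consider the collection of partial arcs $y\in\mathcal{X}^{1,1}(t_0,x_0)$ defined on $[t_0,\tau]$ for some $\tau\in[t_0,t]$ that satisfy $u(t_0,x_0)\ge \int_{t_0}^{\tau}\ell(s,y'(s))\,ds + u(\tau,y(\tau))$ along with the consistency $y(\tau)\in\mathrm{dom}(u)$, order them by extension, and use Zorn's lemma to get a maximal such arc with terminal time $\tau_0$. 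The infinitesimal supersolution inequality at the endpoint $(\tau_0, y)$ furnishes, for each small $\delta$, a direction in which the difference quotient is $\le$ an arbitrarily small positive number; one then extends the arc slightly past $\tau_0$, contradicting maximality unless $\tau_0 = t$. The l.s.c. of $u$ and its lower boundedness are what keep the chain closed under taking suprema (so $\tau_0$ lies in the family), and the Tonelli--Nagumo growth (H1) plus convexity provide the compactness/equiabsolute-integrability needed to pass arcs to limits and to control $\int\ell$, exactly as Lemma~\ref{L:Closure} does.

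**The main obstacle.** The delicate point is the extension step past $\tau_0$. The infinitesimal condition only gives $\varliminf_{\delta\downarrow 0}$ of a difference quotient $\le 0$, so for a given endpoint I can only extract a \emph{sequence} $\delta_k\downarrow 0$ and \emph{approximately optimal} directions $x_{\delta_k}$ making the quotient $\le\eps_k$; turning these infinitesimal, $\eps$-loose, direction-dependent data into a genuine finite extension that still satisfies the integrated inequality (and lands back in $\mathrm{dom}(u)$ so the procedure can continue) requires care. I expect to handle this with an $\eps$-relaxation: prove the integrated inequality with an error term $\eps(t-t_0)$ first via the Zorn argument, then remove $\eps$ by a separate limiting/compactness argument using (H1) to extract a convergent subsequence of the near-optimal arcs and Lemma~\ref{L:Closure} (lower semicontinuity of the action under weak $L^1$ convergence of derivatives) to pass the inequality to the limit, together with the l.s.c. of $u$ to control the terminal term. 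This closure-plus-diagonalization is where the real work lies; the order-theoretic scaffolding is comparatively mechanical and closely mirrors part~(i) of the proof of Lemma~\ref{L:Subsolution2}.
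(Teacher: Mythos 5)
Your treatment of the implication ``minimax supersolution $\Rightarrow$ integrated inequality'' (your hard direction) is sound in outline and uses the same ingredients as the paper: the paper obtains the uniform $\varepsilon$ from the existence of a Bolza minimizer, runs a supremum-over-admissible-times argument (with attainment proved by Tonelli--Nagumo compactness and Cesari's lower-semicontinuity theorem) in place of your Zorn scaffolding, and extends past the maximal time using \eqref{E:Supersolution} against the linear-in-time slack $\frac{t-t_0}{t_1-t_0}\varepsilon$; your Zorn-with-$\varepsilon$-slack followed by $\varepsilon$-removal via compactness is a harmless reorganization of the same argument. The genuine gap is in the converse direction, which you declare ``essentially immediate'' up to a ``routine'' gluing. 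It is neither, and the gluing you describe fails for two concrete reasons. First, it is ill-founded: your partition points $t_0+\delta_k$ accumulate at $t_0$, and the arc you want on $[t_0+\delta_{k+1},t_0+\delta_k]$ must realize the integrated inequality at the point $(t_0+\delta_{k+1},x)$, which presupposes that $x$ has already been constructed on the infinitely many earlier subintervals; there is no base case from which the construction can start. Second, even granting such a glued path $x$, telescoping the subinterval inequalities only yields $u(t_0+\delta_N,x)\ge\int_{t_0+\delta_N}^{t_0+\delta_k}\ell(s,x^\prime(s))\,ds+u(t_0+\delta_k,x)$ for $N>k$; to convert this into \eqref{E:LemmaSupersolution} with $t=t_0+\delta_k$ you must let $N\to\infty$ and use $\varliminf_N u(t_0+\delta_N,x)\le u(t_0,x_0)$, i.e.\ \emph{upper} semicontinuity of $u$ at $t_0$ along the path. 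But $u$ is only l.s.c.\ (that is the whole point of the lemma), and lower semicontinuity gives exactly the opposite inequality.

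What is actually needed---and what the paper does in part (ii) of its proof---is to anchor every chain at $(t_0,x_0)$ and go forward in time: using the hypothesis at the constructed intermediate points, one builds for each $n$ a single path $x_n$ satisfying the \emph{exact} inequality $u(t_0,x_0)\ge\int_{t_0}^{t}\ell(s,x_n^\prime(s))\,ds+u(t,x_n)$ at every $t$ in a finite grid $A_n$ (finitely many exact inequalities telescope, so no slack accumulates and no limit toward $t_0$ is ever taken), with the grids $A_n$ increasing and their union dense in $[t_0,T]$. Then (H1) bounds the actions uniformly, Tonelli--Nagumo compactness produces a subsequential limit $\tilde{x}$ of $(x_n)$, and Cesari's Theorem~10.8.ii together with the lower semicontinuity of $u$ pass the inequality to $\tilde{x}$ simultaneously for all $t\in(t_0,T]$; along this one path the $\varliminf$ in \eqref{E:Supersolution} is then $\le 0$ for trivial reasons. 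So this direction requires precisely the compactness-plus-semicontinuity machinery that you reserved for the other direction; as sketched, your proposal does not prove it.
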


\begin{proof} 
(i)
Let $u$ be a minimax supersolution of \eqref{E:TVP0}.
Assume that there exist a $(t_0,x_0)\in\mathrm{dom}(u)$ with $t_0<T$ and a $t_1\in (t_0,T]$ such that, 
for all $x\in\mathcal{X}^{1,1}(t_0,x_0)$,
$u(t_1,x)-u(t_0,x_0)+\int_{t_0}^{t_1} \ell(s,x^\prime(s))\,ds>0$.
Then
there is an $\eps>0$ such that 
\begin{align}\label{E:ProofLemmaSupersolution}
u(t_1,x)-u(t_0,x_0)+\int_{t_0}^{t_1} \ell(s,x^\prime(s))\,ds>\eps
\end{align}
for all $x\in\mathcal{X}^{1,1}(t_0,x_0)$
because  
$\inf_{x\in\mathcal{X}^{1,1}(t_0,x_0)} \left[\int_{t_0}^{t_1} \ell(s,x^\prime(s))\,ds+u(t_1,x)\right]$
has a minimizer {\color{black} in $\R\cup\{\infty\}$}  ({\color{black}this can be shown exactly as the corresponding
statement for the non-path-dependent counterpart Theorem~11.1.i in 
Section~11.1 of \cite{Cesari} and its extension to unbounded domains, which is
treated in Section~11.2 of} \cite{Cesari}).
Next, consider the (non-void) set $S$ of all $(t,x)\in [t_0,t_1)\times\mathcal{X}^{1,1}(t_0,x_0)$ for which 
$u(t,x)-u(t_0,x_0)+\int_{t_0}^t \ell(s,x^\prime(s))\,ds\le\frac{t-t_0}{t_1-t_0}\cdot\eps$
holds. Let $\tilde{s}:=\sup\{t:(t,x)\in S\text{ for some $x$}\}$.
 This supremum is attained.
To see this, consider  a sequence $(s_n,x_n)_n$ 
in $S$ with $s_n\to \tilde{s}$.
Since $\sup_n\int_{t_0}^{s_n} \ell(s,x_n^\prime(s))\,ds<\infty$ and $u$ is bounded from below,
one can show as in Sections 11.1 and 11.2 of \cite{Cesari} that   there is a subsequence $(s_{n_k},x_{n_k})_k$ such that $(x_{n_k})$ converges to some
$\tilde{x}$ in $\mathcal{X}^{1,1}(t_0,x_0)$, 
i.e.,  $\norm{x_{n_k}-\tilde{x}}_\infty+\norm{x_{n_k}^\prime-\tilde{x}^\prime}_{L^1(t_0,T;\R^d)}\to 0$.
Lower semi-continuity of $u$ together with Theorem~10.8.ii in \cite{Cesari} yield
\begin{align*}
u(\tilde{s},\tilde{x})+\int_{t_0}^{\tilde{s}} \ell(s,\tilde{x}^\prime(s))\,ds  &\le
\varliminf_k u(s_{n_k},x_{n_k})+\varliminf_k
 \int_{t_0}^{s_{n_k}}  \ell(s,x_{n_k}^\prime(s))\,ds
\\&
\le  \frac{\tilde{s}-t_0}{t_1-t_0}\cdot\eps +u(t_0,x_0).
\end{align*}
Thus  $(\tilde{s},\tilde{x})\in\mathrm{dom}(u)$ and,
 by \eqref{E:ProofLemmaSupersolution}, $\tilde{s}<t_1$. 
 Hence, by \eqref{E:Supersolution}, there is 
 a $\delta>0$ and an $x\in\mathcal{X}^{1,1}(\tilde{s}, \tilde{x})$ such that
$u(\tilde{s}+\delta,x)-u(\tilde{s},\tilde{x})
+\int_{\tilde{s}}^{\tilde{s}+\delta}\ell(s,x^\prime(s))\,ds \le \frac{\delta}{t_1-t_0}\cdot\eps$
and  $\tilde{s}+\delta<t_1$.
Thus $(\tilde{s}+\delta,x)\in S$, which is a contradiction to the maximality of $\tilde{s}$.

(ii) 
Fix $(t_0,x_0)\in\mathrm{dom}(u)$ with $t_0<T$.
Suppose that, for every $(\tilde{t}_0,\tilde{x}_0)\in [t_0,T)\times\mathcal{X}^{1,1}(t_0,x_0)$ 
for every $t\in (\tilde{t}_0,T]$, there exists an $x\in\mathcal{X}^{1,1}(\tilde{t}_0,\tilde{x}_0)$ such that \eqref{E:LemmaSupersolution} holds with $(t_0,x_0)$ replaced by $(\tilde{t}_0,\tilde{x}_0)$.
 Then one can proceed similarly 
as in the proof of Lemma~3.6
in \cite{BK18JFA} to show that there exists a
sequence $(x_n)$ in
 $\mathcal{X}^{1,1}(t_0,x_0)$  and an increasing
  sequence $(A_n)$ of finite  subsets of $[t_0,T]$ whose union $A$ is dense in 
 $[t_0,T]$
 such that, for each $n\in\N$ and every $t\in A_n$,  \eqref{E:LemmaSupersolution} holds with $x$ replaced by $x_n$.
Thus $\sup_{t\in A,n\in\N} \int_{t_0}^t \ell(s,x_n^\prime(s))\,ds\le u(t_0,x_0)+c<\infty$, where $-c$ is a lower bound of $u$.
Note that we are in a similar situation as in part (i) of this proof and thus it can be shown in the same way 
that there is a subsequence $(x_{n_k})$ of $(x_n)$
that converges to some $\tilde{x}$ in $\mathcal{X}^{1,1}(t_0,x_0)$.
Now, fix $t\in (t_0,T]$ and  a sequence $(s_k)$ in $A$ with $s_k\in A_{n_k}$ for each $k$
and with $s_k\to t$ as $k\to\infty$. By  Theorem~10.8.ii in \cite{Cesari} and lower semi-continuity of $u$,
\begin{align*}
u(t_0,x_0)&\ge \varliminf_k\left[
\int_{t_0}^{s_k} \ell(s,x_{n_k}^\prime(s))\,ds + u(s_k,x_{n_k})
\right]\ge \int_{t_0}^t \ell(s,\tilde{x}^\prime(s)\,ds+u(t,\tilde{x}),
\end{align*}
i.e., there is an $x\in\mathcal{X}^{1,1}(t_0,x_0)$ such that,
for every $t\in [t_0,T]$, \eqref{E:LemmaSupersolution} holds. From this point,
 \eqref{E:Supersolution} follows easily.
\end{proof}

\begin{lemma} \label{L:MinimaxSub}
Assume (H1). 
An l.s.c.~function $u:[0,T]\times\Omega\to\R\cup\{\infty\}$ is an
 l.s.c.~minimax subsolution of \eqref{E:TVP0} if and only if
$u(T,\cdot)\le h$ and, for every $(t_0,x_0)\in [0,T)\times\Omega$, $t\in (t_0,T]$, and $x\in\mathcal{X}^{1,1}(t_0,x_0)$,
we have
\begin{align}\label{E:LemmaSubsolution}
u(t_0,x_0)\le \int_{t_0}^t \ell(s,x^\prime(s))\,ds +u(t,x).
\end{align}
\end{lemma}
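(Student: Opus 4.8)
The statement asserts the equivalence of the infinitesimal subsolution inequality \eqref{E:Subsolution} with the global inequality \eqref{E:LemmaSubsolution}; since lower semicontinuity of $u$ and the terminal condition $u(T,\cdot)\le h$ are common to both formulations, only this equivalence needs to be addressed. The plan is to treat the two implications separately, the direction \eqref{E:LemmaSubsolution}$\Rightarrow$\eqref{E:Subsolution} being elementary and the reverse one requiring a monotonicity argument along a fixed trajectory.

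For the easy direction, assume \eqref{E:LemmaSubsolution} holds. Given $(t_0,x_0)$, a time $t\in(t_0,T]$, and $x\in\mathcal{X}^{1,1}(t_0,x_0)$ with $(t,x)\in\mathrm{dom}(u)$ and $\int_{t_0}^t\ell(s,x^\prime(s))\,ds<\infty$, I would apply \eqref{E:LemmaSubsolution} with the initial datum $(t-\delta,x)$ in place of $(t_0,x_0)$ (note $x\in\mathcal{X}^{1,1}(t-\delta,x)$ trivially) and terminal time $t$. This gives $u(t-\delta,x)-u(t,x)-\int_{t-\delta}^t\ell(s,x^\prime(s))\,ds\le 0$ for every small $\delta>0$, so the quotient in \eqref{E:Subsolution} is nonpositive and its lower limit is $\le 0$.

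The content lies in the converse. Fix $(t_0,x_0)$, $t_1\in(t_0,T]$, and $x\in\mathcal{X}^{1,1}(t_0,x_0)$; I may assume $u(t_1,x)<\infty$ and $\int_{t_0}^{t_1}\ell(s,x^\prime(s))\,ds<\infty$, since otherwise the right-hand side of \eqref{E:LemmaSubsolution} is $+\infty$ and there is nothing to prove. Introduce the one-variable function $\psi(s):=u(s,x)+\int_{t_0}^s\ell(r,x^\prime(r))\,dr$ on $[t_0,t_1]$. Because $s\mapsto(s,x)$ is $\mathbf{d}_\infty$-continuous (using the continuity of the fixed path $x$) and the integral term is continuous, $\psi$ is l.s.c., and \eqref{E:LemmaSubsolution} is exactly the claim $\psi(t_0)\le\psi(t_1)$. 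Along the trajectory $x$, the infinitesimal inequality \eqref{E:Subsolution} translates, at every $s\in(t_0,t_1]$ with $\psi(s)<\infty$ (equivalently $(s,x)\in\mathrm{dom}(u)$, using $\int_{t_0}^s\ell<\infty$), into $\varliminf_{\delta\downarrow0}\delta^{-1}[\psi(s-\delta)-\psi(s)]\le 0$.

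To deduce $\psi(t_0)\le\psi(t_1)$ I would run a backward propagation argument: for fixed $\eps>0$ set $S_\eps:=\{s\in[t_0,t_1]:\psi(s)\le\psi(t_1)+\eps(1+t_1-s)\}$, which is closed by lower semicontinuity of $\psi$ and contains $t_1$, and let $\tau:=\inf S_\eps$. If $\tau>t_0$, then $\tau\in S_\eps$ forces $\psi(\tau)<\infty$, so the translated inequality applies at $\tau$ and produces arbitrarily small $\delta>0$ with $\psi(\tau-\delta)<\psi(\tau)+\tfrac{\eps}{2}\delta$; comparing this with the fact that every $s<\tau$ lies outside $S_\eps$ (hence $\psi(\tau-\delta)>\psi(t_1)+\eps(1+t_1-\tau)+\eps\delta$) yields $\eps\delta<\tfrac{\eps}{2}\delta$, a contradiction. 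Thus $\tau=t_0$, whence $\psi(t_0)\le\psi(t_1)+\eps(1+t_1-t_0)$, and letting $\eps\downarrow 0$ finishes the proof. The crux — and the only genuinely delicate step — is this Dini-monotonicity lemma for the l.s.c.~function $\psi$; the strictly increasing slack $\eps(1+t_1-s)$ is what makes the one-sided lower Dini condition propagate across a single point and simultaneously absorb possible $+\infty$ values of $\psi$.
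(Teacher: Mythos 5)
Your proof is correct and follows essentially the same route as the paper's: your closed set $S_\eps$ with affine slack and its infimum $\tau$ play exactly the role of the paper's set with slack $\frac{t_1-t}{t_1-t_0}\,\eps$ and its attained infimum $\tilde{s}$, and both arguments propagate past that infimum point using \eqref{E:Subsolution} together with lower semi-continuity along the $\mathbf{d}_\infty$-continuous map $s\mapsto(s,x)$, the only differences being cosmetic (direct formulation via $\psi$ versus contradiction). One small point worth making explicit: your conclusion $\psi(t_0)\le\psi(t_1)$ yields \eqref{E:LemmaSubsolution} only after identifying $u(t_0,x)$ with $u(t_0,x_0)$, which holds because $\mathbf{d}_\infty((t_0,x),(t_0,x_0))=0$ (as $x\vert_{[0,t_0]}=x_0\vert_{[0,t_0]}$) and $u$ is l.s.c.~with respect to this pseudo-metric.
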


\begin{proof}
(i) 
Let $u$ be an l.s.c.~minimax subsolution of \eqref{E:TVP0}. For the sake of a contradiction,
assume that 
 there exist a $(t_0,x_0)\in [0,T)\times\Omega$, a $t_1\in (t_0,T]$, an $x\in\mathcal{X}^{1,1}(t_0,x_0)$,
and an $\eps>0$ such that
\begin{align}\label{E:ProofLemmaSubsolution}
u(t_0,x_0)-u(t_1,x)-\int_{t_0}^{t_1}\ell(s,x^\prime(s))\,ds>\eps
\end{align}
as well as $(t_1,x)\in\mathrm{dom}(u)$ 
and  $\int_t^{t_1} \ell(s,x^\prime(s))\,ds <\infty$  for all $t\in [t_0,t_1]$.
 Put
\begin{align*}
\tilde{s}:=\inf\left\{t\in (t_0,t_1]: u(t,x)-u(t_1,x)-\int_t^{t_1} \ell(s,x^\prime(s))\,ds\le \frac{t_1-t}{t_1-t_0}\cdot\eps\right\}.
\end{align*}
We show that this infimum is attained. To this end, consider a sequence $(s_n)$ in $(t_0,t_1]$ with
$s_n\to \tilde{s}$ and 
$u(s_n,x)-u(t_1,x)-\int_{s_n}^{t_1}\ell(s,x^\prime(s))\,ds \le \frac{t_1-s_n}{t_1-t_0}\cdot\eps$. Then
\begin{align*}
&u(\tilde{s},x)-\int_{\tilde{s}}^{t_1} \ell(s,x^\prime(s))\,ds\le
\varliminf_n u(s_n,x)-\lim_n  \int_{s_n}^{t_1} \ell(s,x^\prime(s))\,ds\\
&\le \varliminf_n \left[u(s_n,x)- \int_{s_n}^{t_1} \ell(s,x^\prime(s))\,ds\right]
\le \frac{t_1-\tilde{s}}{t_1-t_0}\cdot\eps+u(t_1,x),
\end{align*}
i.e., $\tilde{s}$ is a minimum and $(\tilde{s},x)\in\mathrm{dom}(u)$. 
Moreover, by \eqref{E:ProofLemmaSubsolution}, $t_0<\tilde{s}\le t_1$.
Finally, by \eqref{E:Subsolution}, there is  a $\delta\in (0,\tilde{s}-t_0]$ such that
$u(\tilde{s}-\delta,x)-u(\tilde{s},x)-\int_{\tilde{s}-\delta}^{\tilde{s}}\ell(s,x^\prime(s))\,ds\le
\frac{\delta}{t_1-t_0}\cdot\eps$. Hence, 
$u(\tilde{s}-\delta,x)-u(t_1,x)-\int_{\tilde{s}-\delta}^{t_1}\ell(s,x^\prime(s))\,ds
\le \frac{t_1-(\tilde{s}-\delta)}{t_1-t_0}\cdot\eps$,
which is a contradiction to the minimality of $\tilde{s}$.

(ii) Showing the remaining direction is straight-forward.
\end{proof}

\begin{proof}[Proof of Theorem~\ref{T:1stOrder}]
(a) Establishing the lower semi-continuity of $v_0$ is quite standard.
It is very similar to the proof of Lemma~\ref{L:LSC} and actually slightly easier as
no probability is involved (cf.~also Proposition 3.1 in \cite{DM-F00} for the non-path-dependent case).
To deduce that $v_0$ is an l.s.c.~minimax solution,
it suffices to apply the dynamic programming principle
with the existence of a minimizer for $(\text{DOC})$ 
(cf.~{\color{black} Theorem~11.1.i in} \cite{Cesari})
together with Lemmata~\ref{L:MinimaxSuper}
 and \ref{L:MinimaxSub}.
Finally, we can apply Lemmata~\ref{L:MinimaxSuper}
 and \ref{L:MinimaxSub} again to obtain a comparison principle
between l.s.c.~minimax subsolutions and minimax supersolutions,
from which uniqueness follows.

(b) Taking Remark~\ref{R:NoLavrentiev} into account, one can see that the proof follows essentially from the content of Section~\ref{S:2ndOrder}. 
The measures $\Prob_{t_0,x_0,n}$ need to be replaced by the Dirac measures
under which $X=x(\cdot\wedge t_0)$ a.s.~for each $(t_0,x_0)\in [0,T]\times\Omega$ and one should note that
 the domains of the controls are different ($[0,T]$ here vs. $[0,T]\times\Omega$
in Section~\ref{S:2ndOrder}). Moreover, the BSDE argument in part (i) of the proof of
Theorem~\ref{T:TVPn}
needs to be replaced by the deterministic dynamic programming principle.
\end{proof}

{\color{black} \section{Conclusion}}

{\color{black}
The main contributions of this paper are 
a non-Markovian vanishing viscosity result for path-dependent PDEs (PPDEs) that corresponds to
 the non-exponential Schilder theorem in \cite{BLT},
 well-posedness for new notions of generalized solutions of PPDEs that can have quadratic or even
super-quadratic growth in the gradient, and 
a non-Markovian Feynman-Kac formula for convex superquadratic BSDEs.

We want to emphasize that,
here,  control-theoretic methods, or  equivalently (in our case), results from the theory of large deviations
have been applied
to obtain stability results for PPDEs (corresponding PDEs results have been obtained 
in \cite{BLT} in the Markovian case).  Of great interest would be research that investigates the opposite direction, i.e.,
to establish stronger results
(in particular, suitable stability results)
for PPDEs with (only) quadratic growth in the gradient in order to  derive non-Markovian large
deviation results similarly as it has been successfully done in the Markovian case via PDEs.
}

\bibliographystyle{amsplain}
\bibliography{BK19}
\end{document}